\tikzset{
	treenode/.style = {align=center, inner sep=0pt, text centered,
		font=\sffamily},
	arn_r/.style = {treenode, circle, blue, draw=blue,fill=yellow, 
		text width=1em, very thick},
	dot/.style={circle,draw,inner sep=1.2,fill=black},
}
\newcommand{\T}{\mathbb{T}}
\newcommand{\ol}{\underline{\textrm{O}}}
\newcommand{\ou}{\overline{\textrm{O}}}
\newcommand{\R}{\mathbb{R}}
\newcommand{\s}[3]{\sum_{#1 \in S^{#3} (#2)}}
\newcommand{\bt}{\beta}
\newcommand{\Sh}[1]{\mathcal{S}_{h}(#1)}
\newcommand{\cb}{c_{\beta}}
\newcommand{\itx}[1]{\fint_{\partial\T^{#1}}f(s){\rm d}s}
\definecolor{darkblue}{rgb}{0.05, .05, .9}
\definecolor{darkgreen}{rgb}{0.1, .65, .1}
\definecolor{darkred}{rgb}{0.8,0,0}
\newcounter{dummy}
\newcommand\myitem[1][]{\item[(#1)]\refstepcounter{dummy}\def\@currentlabel{#1}} 
\newcommand{\labeltext}[3][]{ 
	\@bsphack
	\csname phantomsection\endcsname
	\def\tst{#1}
	\def\labelmarkup{\emph}
	\def\refmarkup{\normalfont}
	\ifx\tst\empty\def\@currentlabel{\refmarkup{#2}}{\label{#3}}
	\else\def\@currentlabel{\refmarkup{#1}}{\label{#3}}\fi
	\@esphack
	\labelmarkup{#2}
}
\newtheorem{theorem}{Theorem}[section]
\newtheorem{corollary}{Corollary}
\newtheorem{lemma}[theorem]{Lemma}
\newtheorem{proposition}[theorem]{Proposition}
\theoremstyle{definition}
\newtheorem{definition}{Definition}
\newtheorem{remark}{Remark}
\begin{document}
	
	\title[The two membranes problem in a regular tree]{\bf The two membranes problem in a regular tree}

	\author[I. Gonz\' alvez, A. Miranda
		 and J. D. Rossi]{Irene Gonz\' alvez, Alfredo Miranda
		 and Julio D. Rossi}
		 
		 \address{ 
		 I. Gonz\'alvez. Departamento de Matem\'{a}ticas, Universidad Aut\'{o}noma de Madrid,
		Campus de Cantoblanco, 28049 Madrid, Spain.
		\newline
		\texttt{~irene.gonzalvez@uam.es}; 
		\bigskip
		\newline
		 \indent A. Miranda and J. D. Rossi. Departamento de Matem\'{a}ticas, FCEyN, Universidad  de Buenos Aires, Pabell\'{o}n I, Ciudad Universitaria (1428), Buenos Aires, Argentina.  \newline 
		 \texttt{~amiranda@dm.uba.ar,~jrossi@dm.uba.ar }		
}

	\date{}

	\maketitle

	\begin{abstract}
		In this paper we study the two membranes problem for
		 operators given in terms of a mean value formula on a regular tree.
		We show existence of solutions under adequate conditions
		on the boundary data and the involved source terms. We also show that,
		when the boundary data are strictly separated, the coincidence set is separated from the boundary and thus it contains only a
		finite number of nodes.
	\end{abstract}

\section{Introduction.}

One of the systems that attracted the attention 
of the partial differential equations community is the two membranes problem, that models
the behaviour of two elastic membranes that are clamped at the boundary of a prescribed domain, they are assumed to be ordered (one membrane is 
assumed to be above the other)
and they are subject to different external forces. 
The main assumption here is that the two membranes do not penetrate each other
(they are assumed to be ordered in the whole domain). This situation can be
modeled by two obstacle problems; 
 the lower membrane acts as an obstacle from below for the free elastic equation
 that describes the location of the upper membrane, while, conversely, the upper 
membrane is an obstacle from above for the equation for the lower membrane. 
The mathematical formulation is as follows:
given two differential operators $F(x,u,\nabla u, D^2u)$ and $G
(x,v,\nabla v, D^2v)$ find a pair of functions $(u,v)$ defined inside a prescribed domian $\Omega \subset \mathbb{R}^N$ such that
\begin{equation}
	\label{ED12FG}
	\left\lbrace 
	\begin{array}{ll}
		\displaystyle \min\Big\{ F(x,u(x) ,\nabla u(x), D^2u(x)),(u-v)(x)\Big\}=0, \quad & x\in\Omega, \\[10pt]
	\displaystyle \max\Big\{ G(x,v(x) ,\nabla v(x), D^2v(x)),(v-u)(x)\Big\}=0, \quad & x\in\Omega, \\[10pt]
	u(x)=f(x), \quad & x\in\partial\Omega, \\[10pt]
	v(x)=g(x), \quad & x\in\partial\Omega.
	\end{array}
	\right.
	\end{equation}
	
The two membranes problem for the Laplacian with a right hand side, that is, for $F(D^2u)=-\Delta u +h_1$ and $G(D^2v)=-\Delta v-h_2$, was first considered in \cite{VC} 
using variational arguments. 
When the equations that model the two membranes have a variational structure
this problem can be tackled using calculus of variations (one aims to minimize
the sum of the two energies subject to the constraint that the functions that describe the position of the membranes are always ordered inside the domain), see \cite{VC}. However, when the involved equations are not variational the analysis
relies on monotonicity arguments (using the maximum principle). 
Once existence of a solution (in an appropriate sense) is obtained a lot of interesting 
questions arise, like uniqueness, regularity of the involved functions, a description of the contact set, the regularity of the contact set, etc. See \cite{Caffa1,Caffa2,S}, the 
dissertation \cite{Vivas}
and references therein.
We also mention that a more general version of the two membranes problem
involving more than two membranes was considered by several authors (see for example  \cite{ARS,CChV,ChV}).

\medskip

Our main goal here is to introduce and analyze the obstacle problem and the two membranes problem
when the ambient space is an infinite graph with a regular structure (a regular tree) and the involved operators are given by 
mean value formulas.

\medskip

{\bf A regular tree.} Let us first describe the ambient space.
Regular trees can be viewed as discrete models of the unit ball of $\mathbb{R}^N$. 
A tree is, informally, an infinite graph (that we will denote by $\T$) in which each node but one (the root of the tree denoted by $\emptyset$) has exactly $m+1$ connected nodes, $m$
successors and one predecessor (the root has only $m$
successors). 
A element $x$ in $\T$ is called  a vertice and it is representated as a $k$-tupla for some $k\in\mathbb{N}$ of natural numbers between $0$ and $m$, that is, 
 $x=(0,a_{1},...,a_{k}) $ where $a_{i}\in\{0,...,m-1\}$ for all $1\leq j\leq k$. Here $k$ is the level of $x$, that we denote by $|x|$, according to the distance (in nodes) to the root. The root has zero level, its successors has level one, etc. 
Each vertex $x$ has $m$ successors, that we will denote by $S^1(x)$, and are described by 
	\[
	S^1(x)=\Big\{(0,a_{1},...,a_{k},i)\colon i\in\{0,1,\dots,m-1\}\Big\}
	\]
	 where $x=(0,a_{1},...,a_{k})$. Notice that we used a digit $i\in\{0,1,\dots,m-1\}$ to enumerate the successors of $x$.
In general, we will denote
	\[
	S^l(x)=\Big\{(0,a_{1}, ...,a_{k},i_1,\dots, i_l)\colon i_j \in\{0,1,\dots,m-1\} \quad \mbox{for} \quad 1\leq j\leq l\Big\}
	\]
the set of successors of  $x=(0,a_{1},...,a_{k})$ with $|x|+l$ level. By convention, \mbox{$S^0(x)=\{x\}$.}
If $x$ is not the root then $x$ has a unique 
immediate predecessor, which we will denote $\hat{x}.$
We use the notation $x^j$ to denote the predecessor of $x$ with $|x^j|=j$, that is, for a node $x$ with $|x|=k$, we have $x=(0,a_1,\dots,a_j,\dots,a_k)$, then $x^j=(0,a_1,\dots,a_j)$.   
Let $\T^x\subset\T$ be the subtree with root $x$ (the subset of the tree composed
by all the consecutive successors of $x$). A branch of $\T$ (that we denote by $z$) is an infinite sequence of vertices starting at the root, where each of the vertices in the sequence 
is followed 
by one of its immediate successors.
The collection of all branches forms the boundary of $\T$, denoted 
by $\partial\T$.
Observe that the mapping $\psi:\partial\T\to[0,1]$ defined as
\[
\psi(z)=\sum_{k=0}^{+\infty} \frac{a_k}{m^{k}}
\]
is surjective, where $z=(0,a_1,\dots, a_k,\dots)\in\partial\T$ and
$a_k\in\{0,1,\dots,m-1\}$ for all $k\in\mathbb{N}.$ When
$x=(0,a_1,\dots,a_k)$ is a vertex, we set$
\psi(x)=\psi((0,a_1,\dots,a_k,0,\dots)).$

\medskip

{\bf Mean value formulas and operators.}
Given a parameter $0\leq \beta\leq 1$ we define
the averaging operator $L$ acting on functions $u: \T \mapsto \mathbb{R}$ as follows:
\begin{equation}
	\label{L1}L (u)(x)=u(x) - \beta u(\hat{x})-(1-\beta)\Big(\frac{1}{m}\sum_{y\in S^1 (x)} u(y)\Big),\;\qquad x\not =\emptyset
\end{equation}
and 
\begin{equation}
\label{L2}L (u)(\emptyset)=u(\emptyset) - \Big(\frac{1}{m}\sum_{y\in S^1 (\emptyset)} u(y)\Big).
\end{equation}
The operator $L$ acting on $u$ at a vertex $x\in \T$ is given by
the difference between 
the value of $u$ at that node and the mean value
of $u$ at the vertices that are connected with $x$ (with a weight given by
$\beta$ for the predecessor and $(1-\beta)/m$ for the successors).
Note that here the distinction in the definition of $Lu$ based on whether $x=\emptyset$ or not relies on the fact that the root has no predecessor.

Now, given a function $h : \T \mapsto \mathbb{R}$, a solution to 
$$L (u) (x) = h(x) $$ is a function $u: \T \mapsto \mathbb{R}$ that verifies
$$
u(x)=\beta u(\hat{x})+(1-\beta)\Big(\frac{1}{m}\sum_{y\in S^1 (x)} u(y)\Big)+h(x),\qquad\;x\not =\emptyset,
$$
and the equation
at the root of the tree that only involves the successors and is given by
$$
u(\emptyset) = \Big(\frac{1}{m}\sum_{y\in S^1 (\emptyset)} u(y)\Big)+h(\emptyset).
$$
Notice that the equation $L (u) (x) = h(x) $ is an analogous in the tree
of the partial differential equation $-\Delta u (x) = h(x)$ in the unit ball
of $\mathbb{R}^N$ (recall the mean value formula for harmonic functions).

Next, in order to impose boundary conditions, we want to make precise what we understand for $u|_{\partial \T} = f$.

\begin{definition}
	Given $f:[0,1]\rightarrow \R$ we say that $u$ verify the boundary condition $u|_{\partial \T} = f$ if 
	\begin{equation}
		\lim_{x\rightarrow z\in\partial\T}u(x)= f(\psi(z)),
	\end{equation}
	that is, if $z=(0, a_1,\dots, a_k,\dots)\in\partial\T$, then
	\begin{equation}
		\lim_{n\to \infty}u((0,a_1,...,a_n))= f
		\Big(\sum_{k=1}^{+\infty} \frac{a_k}{m^{k}}\Big).
	\end{equation}
Here and in what follows we will take this limit uniformly, that is, given $\varepsilon >0$ there exists $K\in\mathbb{N}$ such that 
$|u(x) - f(\psi(z))| < \varepsilon$ for every $|x|\geq K$ and every $z\in\partial\T^x$.
\end{definition}

Note that, if $u|_{\partial \T} = f$, due that the limit is uniform, we have that 
	\begin{equation}
	\label{1}	\lim\limits_{N\to\infty}\frac{1}{m^N}\sum_{y\in S^{N}(x)}u(y)=\itx{x}
	\end{equation}

Now, let us state the precise definition of being a 
subsolution/supersolution to $L u=h$ in $\T$, with a boundary condition $u|_{\partial \T} = f$.

\begin{definition}
	Given an operator $L$ defined as before, a function $h : \T \rightarrow \mathbb{R}$ and  a continuous function $f:[0,1]\rightarrow\R$, we say that $u:\T\rightarrow \R$ is a subsolution (resp. supersolution)
	to $Lu=h$ in $\T$ with $u|_{\partial \T} = f$ if it verifies
	\begin{equation}
		\left\lbrace
		\begin{array}{ll}
			\displaystyle L (u)(x) \leq h(x) \quad (resp \geq), & x\in \T,\\[10pt]
			\displaystyle \limsup_{x\rightarrow z\in\partial\T}u(x)\leq f(\psi(z)) \quad (resp \liminf \geq).
		\end{array}
	\right.
	\end{equation}
We say that $u$ is a solution if it is both a subsolution and a supersolution.
\end{definition}

With this definition at hand we can introduce the obstacle problem
(see also Section \ref{sect-soluciones}). 

\begin{definition} \label{def.obs.intro}
	Given an operator $L$, a function $h : \T \rightarrow\mathbb{R}$, a continuous boundary datum $f:[0,1]\rightarrow\R$ and a function $\varphi:\T\rightarrow\mathbb{R}$ (the obstacle), we say that $u:\T\rightarrow \R$ is the solution 
	to the obstacle problem for $L-h$ in $\T$ with $u|_{\partial \T} \geq f$ and
	obstacle $\varphi$ from below ($\varphi$ is assumed to satisfy 
	$\limsup_{x\rightarrow z\in\partial\T}\varphi(x)< f(\psi(z))$) if it verifies
	\begin{equation} \label{obst.u}
	\displaystyle u(x) = \inf \left\{ w(x) : 
	\begin{array}{l} L(w)(x) \geq h(x), \quad x\in \T, \\[6pt]
		\displaystyle  w(x) \geq \varphi(x), \quad x\in\T,\\[6pt]
	 \displaystyle \mbox{ and } \liminf_{x\rightarrow z\in\partial\T}w(x)\geq f(\psi(z))
	 \end{array} \right\}.
			\end{equation}
			Under adequate conditions on $h$ (see below), this is function $u(x)$ is the unique function that satisfies
			$$
			\left\{
			\begin{array}{l}
			\displaystyle
			0=
\max \Big\{ -L( u)({x})+h(x),
\varphi(x)-u(x) \Big\},\quad x\in \T, \\[10pt]
\displaystyle
 \lim_{x\rightarrow z\in\partial\T}u(x)= f(\psi(z)).
\end{array}
\right.
$$

Analogously, given an operator $L$, a function $h : \T \rightarrow \mathbb{R}$, a continuous boundary datum $g:[0,1]\rightarrow\R$ and a function $\varphi:\T\rightarrow\mathbb{R}$ (the obstacle), we say that $v:\T\rightarrow \R$ is the solution 
	to the obstacle problem for $L-h$ in $\T$ with $v|_{\partial \T} = g$ and
	obstacle $\varphi$ from above (here $u$ is assumed to satisfy 
	$\liminf_{x\rightarrow z\in\partial\T}\varphi(x)> g(\psi(z))$) if it verifies
	\begin{equation}
	\displaystyle v(x) = \sup \left\{ w(x) : 
	\begin{array}{l} L(w)(x)\leq h(x), \quad x\in \T, \\[6pt]
		\displaystyle w(x) \leq \varphi(x), \quad x\in \T, \\[6pt] 
	 \displaystyle \mbox{ and } \limsup_{x\rightarrow z\in\partial\T}w(x)\leq g(\psi(z)) 
	 \end{array} \right\}.	
			\end{equation}
			Under adequate conditions on $h$, this can be written as 
			$$
			\left\{
			\begin{array}{l}
			\displaystyle
			0=
\min \Big\{ -L (v) (x) +h(x),
\varphi(x) - v (x) \Big\},\quad x\in \T, 
\\[10pt]
\displaystyle
 \lim_{x\rightarrow z\in\partial\T}v(x)= g(\psi(z)).
\end{array}
\right.
$$
			\end{definition}
			
			With the definition of the obstacle problem we can describe the two membranes problem in $\T$.

			\begin{definition}
			Let  $L_{1}$ and $L_{2}$  be two  averaging operators defined as in \eqref{L1} and \eqref{L2}
			(with different $\beta_1$, $\beta_2$), $h_1,h_2 : \T
		\rightarrow\mathbb{R}$ two functions and $f,g:[0,1]\rightarrow\R$
			two continuous boundary 
			conditions. A pair $(u,v):\T\rightarrow\mathbb{R}$ is a solution to the two membranes problem if $u$ is 
			the solution 
	to the obstacle problem for $L_{1}-h_1$ in $\T$ with $u|_{\partial \T} = f$ and
	obstacle $v$ from below and $v$ is 
			the solution 
	to the obstacle problem for $L_{2}-h_2$ in $\T$ with $v|_{\partial \T} = g$ and
	obstacle $u$ from above.
Recall that $u$ is the solution to the obstacle problem for $L_{1}-h_1$ in $\T$ with $u|_{\partial \T} = f$ and obstacle $v$ from below if it is the infimum of supersolutions for $L_{1}-h_1$ with boundary datum $f$ that are above the obstacle $v$, and analogously, $v$ is the solution to the obstacle problem for $L_{2}-h_2$ in $\T$ with $v|_{\partial \T} = g$ and obstacle $v$ from above if it is the supremum of subsolutions for $L_{2}-h_2$ with boundary datum $g$ that are above the obstacle $u$.
Summarizing, a pair $(u,v)$ is a solution to the two membranes problem
if it solves the system 
\begin{equation}
\label{ED1.general.intro}
\left\lbrace
\begin{array}{l}
\displaystyle 0= 
\max \! \Big\{ -L_1 (u) (x) +h_1(x),
v(x)-u(x) \Big\},\ x\in \T , \\[10pt]\displaystyle
0= 
\min \!\Big\{  -L_2 (v)(x)+h_2(x),
u(x) - v(x)  \Big\},\ x\in \T, 
\end{array}
\right.
\end{equation}
with
$$
	\left\lbrace
	\begin{array}{ll}
		\displaystyle \lim_{x\rightarrow z\in\partial\T}u(x)= f(\psi(z)),\\[10pt]
		\displaystyle \lim_{x\rightarrow z\in\partial\T}v(x)= g(\psi(z)).
	\end{array}
	\right.
$$
\end{definition}

Let us point out that \eqref{ED1.general.intro} has a probabilistic interpretation that
we will describe in Section \ref{sect-probabil}. 

	Now, for short, we introduce a notation, for a node $x$, let
	$$
	\Sh{x}:= \frac{1}{\bt}\sum_{i=1}^{\infty}\sum_{j=0}^{i-1}\Big(\frac{\bt}{1-\bt}\Big)^{k-j}\s{y}{x}{j}\frac{h(y)}{m^{j}}.
	$$

With this notation at hand, we are ready to state our main result for the two membranes problem.

\begin{theorem} \label{teo.main.intro}
Given two averaging operators, $L_{1}$ and $L_{2}$ defined as in \eqref{L1} and \eqref{L2} (that involve two different
parameters $\beta_1$, $\beta_2$), two different functions $h_1,h_2: \T \rightarrow \mathbb{R}$ and two continuous boundary 
			conditions $f,g:[0,1]\rightarrow\R$ with $f > g$ in $[0,1]$, 
			such that
			\begin{equation} \label{cond.sistema.intro}
		\begin{array}{l}	
		\displaystyle 0\leq \beta_1 < \frac12, \qquad 0\leq\beta_2 < \frac12, \\[10pt] 
				\displaystyle	
				\lim_{x\in \T, |x|=k \to \infty} \sum_{j=1}^{k}
			\Big(\frac{\beta}{1-\beta}\Big)^{k-j}\! S_{h_1}(x^j) = 0,  \\[10pt] 
				\displaystyle	\lim_{x\in \T, |x|=k \to \infty} \sum_{j=1}^{k}
			\Big(\frac{\beta}{1-\beta}\Big)^{k-j}\! S_{h_2}(x^j) = 0,
\end{array}
		\end{equation}
then,	 there exists a pair $(u,v):\T\mapsto \mathbb{R}$ that is a solution to the two membranes problem. That is, $(u,v)$ solves the system \eqref{ED1.general.intro}.
			
			Moreover, under these conditions the coincidence set $\{x\in\T : u(x) = v(x) \}$ is finite.
\end{theorem}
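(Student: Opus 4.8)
\medskip

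The plan is to build the pair $(u,v)$ by a monotone iteration scheme and then analyse the coincidence set by a comparison/barrier argument. First I would set up the iteration: start with $u_0$ the solution of $L_1 u_0 = h_1$ in $\T$ with $u_0|_{\partial\T}=f$ (this exists and is unique under the hypothesis $\beta_1<1/2$ together with the summability condition on $S_{h_1}$ — these are exactly the conditions under which the obstacle-free Dirichlet problem, and hence the obstacle problem, is solvable; I would invoke the corresponding result from Section \ref{sect-soluciones}). Then define $v_0$ as the solution of the obstacle problem for $L_2-h_2$ with datum $g$ and obstacle $u_0$ from above, then $u_1$ as the solution of the obstacle problem for $L_1-h_1$ with datum $f$ and obstacle $v_0$ from below, and so on. Using the comparison principle for $L_i$ (valid because $\beta_i<1/2$, so the operator is, in the discrete sense, proper/degenerate-elliptic with a sign that makes the maximum principle work) one checks that the iterates are monotone: $v_0\leq v_1\leq\cdots$ is increasing and $u_0\geq u_1\geq\cdots$ is decreasing, while all the time $g\leq v_k\leq u_k\leq f$ on $\partial\T$ and, by the obstacle inequalities, $v_k\leq u_k$ everywhere. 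Hence both sequences converge pointwise to limits $u=\lim u_k$, $v=\lim v_k$ with $v\leq u$.

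\medskip

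Next I would show that the limit pair solves the system \eqref{ED1.general.intro}. Passing to the limit in the two obstacle characterizations (each equation $0=\max\{-L_1 u_k + h_1, v_{k-1}-u_k\}$ and $0=\min\{-L_2 v_k+h_2, u_{k-1}-v_k\}$ is a finite algebraic relation at each fixed node once one knows the values at the neighbouring nodes, and the convergence is pointwise and monotone, so $L_i$ passes to the limit node by node) gives
$$
0=\max\{-L_1 u + h_1,\ v-u\},\qquad 0=\min\{-L_2 v + h_2,\ u-v\}\qquad\text{in }\T .
$$
The boundary conditions $u|_{\partial\T}=f$, $v|_{\partial\T}=g$ are the delicate point here: one has $u_k|_{\partial\T}=f$ and $v_k|_{\partial\T}=g$ for every $k$, but the limits are only pointwise, so I would need a uniform (in $k$) control of the boundary limits. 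This is precisely what the two displayed limit conditions in \eqref{cond.sistema.intro} provide: they guarantee that the ``corrector'' coming from the source $h_i$ decays uniformly as $|x|\to\infty$, so that $u_k$ and $v_k$ attain their boundary data uniformly with a modulus independent of $k$ (the obstacle, being strictly below $f$ resp. above $g$ near $\partial\T$, does not interfere near the boundary), and this uniformity is inherited by the limit. I expect this uniform-attainment-of-the-boundary step to be the main obstacle; the rest of the convergence argument is routine monotone-limit bookkeeping.

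\medskip

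Finally, for the finiteness of the coincidence set $\{u=v\}$, the key observation is that $f>g$ on the compact interval $[0,1]$, so $f-g\geq 2\delta>0$ for some $\delta$. Since $u$ attains $f$ and $v$ attains $g$ uniformly on $\partial\T$, there is a level $K$ such that $u(x)-v(x) > \delta$ for all $x$ with $|x|\geq K$; hence the coincidence set is contained in the finite set $\{x\in\T:|x|<K\}$. The one thing to be careful about is that the uniform boundary attainment used here must be for $u$ and $v$ themselves, which is exactly what was established in the previous step — so the two parts of the theorem are proved by the same mechanism. I would also remark that at a node where $u(x)=v(x)$ the two equations force $-L_1 u(x)+h_1(x)\leq 0$ and $-L_2 v(x)+h_2(x)\geq 0$, which, combined with $u\geq v$ elsewhere, could be used to give an alternative (more quantitative) separation estimate, but the compactness-plus-uniform-boundary argument above already yields the stated finiteness.
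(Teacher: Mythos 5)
Your overall strategy — iterate obstacle problems, use monotonicity and comparison to pass to the limit, then use a barrier at the boundary to show the contact set is finite — is the same as the paper's, and the treatment of the boundary condition and of the finiteness of the coincidence set is essentially correct. However, the monotonicity set-up contains an inconsistency that reveals the details have not been fully checked. The maps $\varphi\mapsto\ol(L_1,h_1,\varphi,f)$ and $\varphi\mapsto\ou(L_2,h_2,\varphi,g)$ are both \emph{increasing} in the obstacle: a larger obstacle from below raises the inf of admissible supersolutions, and a larger obstacle from above relaxes the constraint in the sup of admissible subsolutions. Consequently the two sequences $\{u_n\}$ and $\{v_n\}$ must move in the \emph{same} direction; the pattern you state, $v_n$ increasing while $u_n$ decreasing, can only hold if both sequences are in fact constant. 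This is exactly what happens with your choice of initialization. Since $u_0$ solves $L_1 u_0=h_1$ with datum $f$ and $v_0=\ou(L_2,h_2,u_0,g)\leq u_0$, the function $u_0$ is already admissible in the infimum defining $u_1$, and by the comparison principle (Lemma \ref{lema-compar}) it is the infimum, so $u_1=u_0$, hence $v_1=v_0$, and the whole ``iteration'' freezes after one step. That is not a contradiction — it actually shows directly that the pair $(u_0,v_0)$ solves \eqref{ED1.general.intro} — but the monotone-limit machinery you build around it is vacuous, and the claimed monotone directions are misleading. The paper avoids this by seeding the iteration with a strict subsolution $v_0$ of $L_2-h_2$ with datum $g$ (not an obstacle-problem solution), which makes both sequences genuinely increasing, with boundedness supplied by a supersolution constructed via Remark \ref{rem.33}. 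If you insist on starting from the unconstrained solution $u_0$, you should either note explicitly that the scheme terminates immediately, or instead start from a strict supersolution, in which case both sequences decrease. In either variant, the passage from the inequalities \eqref{desig.infty.u}--\eqref{desig.infty.v} and the boundary-attainment step (which as you correctly flag is where the two limit conditions in \eqref{cond.sistema.intro} enter) give the result, and the finiteness of $\{u=v\}$ then follows from $f-g\geq 2\delta>0$ exactly as you describe.
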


To prove this result we first deal with a single equation and 
find necessary and sufficient conditions for the existence of a solution. 
We have the following result that we believe that has its own interest.

\begin{theorem} \label{teo.intro.una.eacuacion}
Given an averaging operator, $L$, a function $h: \T\rightarrow\mathbb{R}$ and a continuous boundary 
			condition $f:[0,1]\rightarrow\R$, there exists
			a unique bounded solution to
			\begin{equation}
	\label{EQL.intro}
	\left\lbrace
	\begin{array}{ll}
		\displaystyle L(u)(x)= h(x), \quad  & x\in \T,\\[8pt]
		\displaystyle \lim_{x\rightarrow z\in\partial\T}u(x)= f(\psi(z)).
	\end{array}
	\right.
\end{equation}
			if and only if
			\begin{equation} \label{cond.una.ecuacion.intro}
		\begin{array}{l}	
		\displaystyle 0\leq \beta < \frac12, \\[10pt] 
				\displaystyle	
				\lim_{x\in \T, |x|=k \to \infty} \sum_{j=1}^{k}
			\Big(\frac{\beta}{1-\beta}\Big)^{k-j}\! S_h(x^j) = 0.
			\end{array}
		\end{equation}	
		\end{theorem}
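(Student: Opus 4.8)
The plan is to explicitly construct the candidate solution to \eqref{EQL.intro} by iterating the mean value identity, and then to match the resulting formula against the hypotheses \eqref{cond.una.ecuacion.intro}. First I would rewrite the equation $L(u)(x)=h(x)$ for $x\neq\emptyset$ as
\[
u(\hat x)=\frac{1}{\beta}u(x)-\frac{1-\beta}{\beta}\cdot\frac{1}{m}\sum_{y\in S^1(x)}u(y)-\frac{1}{\beta}h(x),
\]
so that the value at a node is determined by the values on the next level plus a source contribution; iterating this $k$ times starting from the root gives an expression of the form
\[
u(\emptyset)=\sum_{N}\frac{(1-\beta)^{?}}{\beta^{?}}\,\frac{1}{m^{N}}\sum_{y\in S^N(\emptyset)}u(y)+(\text{terms involving }h),
\]
and the analogous identity holds with $\emptyset$ replaced by an arbitrary node $x$ and $S^N(\emptyset)$ replaced by $S^N(x)$. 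The ratio $\beta/(1-\beta)$ appears precisely here, which is why the condition $\beta<1/2$, i.e. $\beta/(1-\beta)<1$, is the natural dividing line: when $\beta<1/2$ the geometric weights on deep levels decay and, using the uniform-limit boundary condition together with \eqref{1}, the ``boundary part'' of the iterated formula converges to a weighted average of $f$ over $\partial\T^x$, while the ``source part'' is exactly the series defining $\Sh{x}$ and the correction terms $\sum_{j=1}^k(\beta/(1-\beta))^{k-j}S_h(x^j)$.

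Next I would treat the two implications separately. For the ``if'' direction: assuming $0\le\beta<1/2$ and the limit condition on $S_h$, I would \emph{define} $u$ by the closed formula obtained from the iteration (boundary average of $f$ with the appropriate weights, plus the source series), check that the series converges absolutely — here is where $\beta/(1-\beta)<1$ and the continuity/boundedness of $f$ on the compact interval $[0,1]$ are used to get boundedness of $u$ — verify by a direct (if tedious) computation that this $u$ satisfies $L(u)(x)=h(x)$ at every node including the root, and finally verify the boundary condition $\lim_{x\to z}u(x)=f(\psi(z))$ uniformly: the term involving $f$ tends to $f(\psi(z))$ by uniform continuity of $f$ and shrinking of $\psi(\partial\T^x)$, and the remaining terms tend to $0$ precisely by the hypothesis $\lim_{|x|=k\to\infty}\sum_{j=1}^k(\beta/(1-\beta))^{k-j}S_h(x^j)=0$. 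Uniqueness among bounded solutions would follow from a comparison/maximum-principle argument: if $u_1,u_2$ are two bounded solutions then $w=u_1-u_2$ is a bounded solution of $L(w)=0$ with zero boundary values, and iterating $L(w)(x)=0$ shows $w(x)$ equals a convex-type combination (with the $\beta/(1-\beta)$ weights) of deep-level averages of $w$, which vanish in the limit, forcing $w\equiv0$.

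For the ``only if'' direction I would argue contrapositively. If $\beta\ge 1/2$, the weights $\beta/(1-\beta)\ge 1$ no longer decay, and one shows that bounded solutions cannot satisfy a prescribed boundary datum — e.g. by exhibiting that the homogeneous problem $L(w)=0$, $w|_{\partial\T}=0$ has a nontrivial bounded solution, or that for suitable $f$ no bounded solution exists because the iterated identity forces $u$ to grow. If $\beta<1/2$ but the limit in \eqref{cond.una.ecuacion.intro} fails, then given any bounded solution $u$ the iteration formula expresses $\sum_{j=1}^k(\beta/(1-\beta))^{k-j}S_h(x^j)$ as $u(x)$ minus the boundary-average term minus quantities that do tend to $0$; since $u$ bounded with boundary datum $f$ forces that combination to converge (to $0$) as $|x|\to\infty$, the failure of the hypothesis yields a contradiction.

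The main obstacle I anticipate is bookkeeping: getting the iterated mean value formula written correctly with the exact exponents on $\beta$ and $1-\beta$ and the correct index ranges, so that the emergent source term is \emph{exactly} $\Sh{x}$ as defined before the theorem and the correction sums are exactly those in \eqref{cond.una.ecuacion.intro} — in particular reconciling the expansion around an interior node $x$ (which has a predecessor) with the special equation at the root. Once the formula is pinned down, the convergence estimates are routine geometric-series and uniform-continuity arguments, and the equivalence with \eqref{cond.una.ecuacion.intro} essentially reads off the formula. I would also need to be careful that ``$\limsup/\liminf$'' boundary conditions in the definitions collapse to genuine uniform limits for the constructed $u$, which again is where boundedness plus the decay hypothesis enter.
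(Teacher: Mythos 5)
Your overall strategy — derive an explicit representation formula for $u$ by iterating the mean value identity, read off $\beta<\tfrac12$ and the $S_h$ condition from convergence of that formula, prove the ``if'' direction by plugging the formula in, and prove uniqueness by comparison — is exactly the strategy of the paper. However, the specific iteration you propose does not actually close, and you are missing the paper's key device that makes the iteration go through.

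The rearrangement
\[
u(\hat x)=\frac{1}{\beta}u(x)-\frac{1-\beta}{\beta}\cdot\frac{1}{m}\sum_{y\in S^1(x)}u(y)-\frac{1}{\beta}h(x)
\]
expresses $u$ at level $|x|-1$ in terms of $u$ at levels $|x|$ \emph{and} $|x|+1$ simultaneously. When you try to ``iterate this $k$ times starting from the root,'' substituting the equation for each successor reintroduces the predecessor's value: the level-$\ell$ average gets expressed in terms of the level-$(\ell-1)$ and level-$(\ell+1)$ averages, which is a two-sided second-order recurrence, not a one-way march to the boundary. You cannot simply ``iterate downward'' with the formula as written, and even if you solve the level-average recurrence you only control $u(\emptyset)$ and the level averages $\frac{1}{m^N}\sum_{S^N(\emptyset)}u$, not $u(x)$ at a generic node.

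The missing idea is to pass to the increment $w(x):=u(x)-u(\hat x)$, which satisfies the \emph{one-sided} recurrence
\[
\beta\,w(x)=(1-\beta)\,\frac{1}{m}\sum_{y\in S^1(x)}w(y)+h(x),
\]
involving only descendants of $x$. This can be iterated cleanly toward the boundary, summed in $i$ (this is where the geometric series $\sum_i(\beta/(1-\beta))^i$ appears and forces $\beta<\tfrac12$), and then a telescoping identity
\[
\sum_{i\ge 1}\sum_{y\in S^i(x)}\frac{w(y)}{m^i}=\fint_{\partial\T^x}f-u(x)
\]
converts the sum of increments back into the boundary average of $f$ minus $u(x)$, yielding the upward recurrence $u(x)=\frac{c_\beta-1}{c_\beta}u(\hat x)+\frac{1}{c_\beta}\fint_{\partial\T^x}f+\frac{1}{c_\beta}S_h(x)$ that \emph{does} iterate cleanly up to the root. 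Without the increment trick (or an equivalent device to decouple the levels), the ``bookkeeping'' you anticipate is not merely tedious — it is structurally blocked. Relatedly, your ``only if'' argument for $\beta\ge\tfrac12$ is left as a hedge between nonexistence and nonuniqueness; in the paper's argument the necessity of $\beta<\tfrac12$ falls out directly from the increment identity with $h\equiv 0$ (the series $\sum_i(\beta/(1-\beta))^i$ multiplies $w(x)$ on one side and equals a finite quantity on the other, forcing convergence once $w\not\equiv 0$), and the necessity of the $S_h$ limit falls out by setting $f\equiv 0$ in the representation formula.
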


Under the conditions in the statement of the previous theorem we can also
construct sub and supersolutions that are the key to obtain solvability
of the obstacle problem (from above or below). 

Notice that when $\beta <\frac12$ if $h$ is such that
$$
\lim_{x\in \T, |x|=k \to \infty}  S_h(x) = 0
$$
then the second condition in \eqref{cond.una.ecuacion.intro} holds.
In fact, we have 
$$
\begin{array}{l}
\displaystyle 
\sum_{j=1}^{k-1}
			\Big(\frac{\beta}{1-\beta}\Big)^{k-j}\! S_h(x^j)
			= \sum_{j=1}^{k_0}
			\Big(\frac{\beta}{1-\beta}\Big)^{k-j}\! S_h(x^j) +
			\sum_{j=k_0}^{k-1}
			\Big(\frac{\beta}{1-\beta}\Big)^{k-j}\! S_h(x^j) \\[10pt]
			\displaystyle
			\qquad \leq C \Big(\frac{\beta}{1-\beta}\Big)^k \sum_{j=1}^{k_0}
			\Big(\frac{\beta}{1-\beta}\Big)^{-j} +
			\max_{j\geq k_0 } |S_h(x^j)| \sum_{j=k_0}^{k-1}
			\Big(\frac{\beta}{1-\beta}\Big)^{k-j}\! 
			\end{array}
$$
that is small if we first choose $k_0$ large (in order to make $\max_{j\geq k_0 } |S_h(x^j)|$
small) and then send $k$ to infinity. 

To look for examples of functions $h$ that satisfy our condition we take a function that
depends only on the level $k$, that is, $h(y) = \widehat{H} (k)$. Then, for
$0<\beta < \frac12$ the condition $S_h(x) \to 0$ reads as
 $$
\lim_{x\in\T, |x| \to \infty}  \frac{1}{\bt}\sum_{k=1}^{\infty} H(k) \sum_{j=0}^{k-1}\Big(\frac{\bt m}{1-\bt}\Big)^{k-j}  =0.
 $$
Therefore, for $H$ constant, $H=cte$, this condition is not verified.
When $\frac{\bt m}{1-\bt} < 1$ any $H(k)$ with $\lim_{k \to \infty} H(k) =0$ 
satisfies the condition, but any $H(k)>0$ with $\liminf_{k \to \infty} H(k) >0$ does not. 
On the other hand, when $\frac{\bt m}{1-\bt} \geq 1$ we need
that $H(k)$ goes to zero very fast in order to verify the condition.

With the result for a single equation at hand, to prove our main result concerning the two membranes problem, we 
use the strategy of iterate the obstacle problem from above or below. 
Starting with $v_0$ a subsolution to $L_{2}-h_2$ with boundary datum $g$, we let 
$u_1$ the solution to the obstacle problem for $L_{1}-h_1$ with boundary datum $f$
and obstacle $v_0$. Then we take $v_1$ as the solution to the obstacle problem for the second operator $L_{2}-h_2$ with boundary datum $g$
and obstacle $u_1$ and so on. 
In this way, we obtain two sequences, $u_n$, $v_n$, that we prove that are monotone and converge to a solution to the two membranes problem. To show that the boundary values are attained we need to use super and subsolutions for a single equation.
For a similar iteration procedure for second order elliptic partial differential 
equations in a bounded domain in the Euclidean space we refer to
\cite{nosotrosGMR}.

Concerning previous results in the literature for mean value formulas on trees
we refer to \cite{ary,BBGS,delpeFreRo,DPMR1,DPMR2,DPMR3,KLW,KW,MiMosRossi,
s-tree,s-tree1} and references therein. In \cite{delpeFreRo} it was studied the solvability for the single
equation with $h \equiv 0$ and proved that there is a bounded solution  
for any continuous boundary datum $f$ if and only if $0\leq \beta<\frac12$. Our results
include the case $h\neq 0$ with the condition \eqref{cond.una.ecuacion.intro}. For systems of mean value formulas
on trees we quote \cite{MiMosRossi} where two coupled equations (but not of obstacle type) were considered (a probabilistic interpretation of the equations is also provided there).

\medskip

The paper is organized as follows: in Section \ref{sect-soluciones} we analyze
conditions for existence of a solution to a single equation. Section \ref{sect-membranes} 
we deal with the two membranes problem. Finally, in Section \ref{sect-probabil},
we present a probabilistic interpretation of the two membranes problem
using game theory.

\section{Conditions for existence of bounded solutions to a single equation} 
\label{sect-soluciones}

In this section we want to find conditions on $\beta$ and the function $h:\T\rightarrow\R$ such that for every continuous function $f:[0,1] \to \mathbb{R}$
there exists a bounded solution to the Dirichlet problem 
\begin{equation}
	\label{EQL}
	\left\lbrace
	\begin{array}{ll}
		\displaystyle L(u)(x)= h(x), \quad  & x\in \T,\\[10pt]
		\displaystyle \lim_{x\rightarrow z\in\partial\T}u(x)= f(\psi(z)).
	\end{array}
	\right.
\end{equation}

\begin{proof}[Proof of Theorem \ref{teo.intro.una.eacuacion}]
First, suppose that such a bounded solution exists, then we have a function $u : \T \mapsto \mathbb{R}$ such that
	\begin{equation}
	\label{pg1}
	u(x)=\beta u(\hat{x})+(1-\beta)\Big(\frac{1}{m}\sum_{y\in S^1 (x)} u(y)\Big)+h(x),
	\end{equation}
	for $x \in\T\setminus\{\emptyset\}$ and $|x|=k$, if we write $u(x)=\beta u(x)+(1-\beta)u(x)$ we get
	\begin{equation}
	\bt \Big(u(x)-u(\hat{x})\Big)=(1-\bt)\Big(\frac{1}{m}\sum_{y\in S^1(x)} (u(y)-u(x))\Big)+h(x).
	\end{equation}
	We define $w(x)$ for $x\in\T\setminus\{\emptyset\}$ as the increment of $u$ between $x$ and its predecessor, i.e, 
	\begin{equation*}
	w(x):=u(x)-u(\hat{x}).
	\end{equation*}
	Then, the previous equation written in terms of $w$ reads as 
	\begin{align*}
	&\bt w(x)=(1-\bt)\Big(\frac{1}{m}\sum_{y\in S^1(x)} w(y)\Big)+h(x)\\
	&w(x)=\Big(\frac{1-\bt}{\bt}\Big)^{i}\frac{1}{m^{i}}\s{y}{x}{i}w(y)+\frac{1}{\bt}\sum_{j=0}^{i-1}\Big(\frac{1-\bt}{\bt}\Big)^{j}\s{y}{x}{j}\frac{h(y)}{m^{j}}\\
	&w(x)\Big(\frac{\bt}{1-\bt}\Big)^{i}=\frac{1}{m^{i}}\s{y}{x}{i}w(y)+\frac{1}{\bt}\sum_{j=0}^{i-1}\Big(\frac{\bt}{1-\bt}\Big)^{i-j}\s{y}{x}{j}\frac{h(y)}{m^{j}}.
	\end{align*}
	Adding from $i=1$ to infinity, we get
	\begin{equation}
	\label{g6} w(x)\sum_{i=1}^{\infty}\Big(\frac{\bt}{1-\bt}\Big)^{i}=\sum_{i=1}^{\infty}\s{y}{x}{i}\frac{w(y)}{m^{i}} + \frac{1}{\bt}\sum_{i=1}^{\infty}\sum_{j=0}^{i-1}\Big(\frac{\bt}{1-\bt}\Big)^{i-j}\s{y}{x}{j}\frac{h(y)}{m^{j}}.
	\end{equation}
	If $h \equiv 0$ we have existence of a solution such that
	\begin{equation}
	\label{g6.88} w(x)\sum_{i=1}^{\infty}\Big(\frac{\bt}{1-\bt}\Big)^{i}=\sum_{i=1}^{\infty}\s{y}{x}{i}\frac{w(y)}{m^{i}} 
	\end{equation}
	and since $w$ is bounded (this follows from the fact that $u$ is assumed to be bounded)
	we have that the right hand side is finite. Hence, we obtain the convergence of the series 
	$$
	\sum_{i=1}^{\infty}\Big(\frac{\bt}{1-\bt}\Big)^{i}
	$$
	that is equivalent to the condition 
	$$
	\frac{\bt}{1-\bt} <1 \iff 0<\beta<\frac12.
	$$
	
	Now, we use the notations
	\begin{align*}
	&\Sh{x}:= \frac{1}{\bt}\sum_{i=1}^{\infty}\sum_{j=0}^{i-1}\Big(\frac{\bt}{1-\bt}\Big)^{i-j}\s{y}{x}{j}\frac{h(y)}{m^{j}},\\
	&\cb:=\sum_{i=0}^{\infty}\Big(\frac{\bt}{1-\bt}\Big)^{i}=\frac{1-\bt}{1-2\bt},\;\;\textrm{ for }\beta\in \Big (0,\frac{1}{2}\Big).
	\end{align*}
	With this notation it is straightforward to check that,
	\begin{equation*}
	\cb-1=\sum_{i=1}^{\infty}\Big(\frac{\bt}{1-\bt}\Big)^{i}>0.
	\end{equation*}
	On the other hand, we claim that 
	\begin{equation}
	\label{g7}	\sum_{i=1}^{\infty}\s{y}{x}{i}\frac{w(y)}{m^{i}} =\itx{x}-u(x).
	\end{equation}
	Assuming this claim, \eqref{g6} can be rewritten as 
	\begin{equation}
	\label{g1}w(x)\Big(\cb-1\Big)=\itx{x}-u(x)+\Sh{x}.
	\end{equation}

		From this, we obtain that the solution $u:\T \mapsto
	\mathbb{R}$ verifies 
	\begin{equation} \label{ttt}
	u(x)=\frac{\cb-1}{\cb}u(\hat{x})+\frac{1}{\cb}\itx{x}+\frac{1}{\cb}\Sh{x},\;\;x\not=\emptyset.
	\end{equation}
	Now, we have 
	$$
	u(\emptyset)= \frac{1}{m}\sum_{y\in S^1 (\emptyset)} u(y)+h(\emptyset),
	$$
	and, using \eqref{ttt} for $x \in S^1 (\emptyset)$ we get
	$$
	u(\emptyset)= \frac{1}{m}\sum_{y\in S^1 (\emptyset)} 
	 \Big( \frac{\cb-1}{\cb}u(\emptyset)+\frac{1}{\cb}\itx{y}+\frac{1}{\cb}\Sh{y}\Big)+h(\emptyset),
	$$	
	  that is 
			$$
		u(\emptyset)= \frac{1}{m}\sum_{y\in S^1 (\emptyset)} 
		\Big( \itx{y}+\Sh{y}\Big)+ c_\beta h(\emptyset).
		$$

		Note that 
		\begin{equation*}
			\int_{0}^{1}f(s)ds=\frac{1}{m}\sum_{y\in S^1 (\emptyset)} 
			\itx{y}
		\end{equation*}
	 and then we obtain
$$
u(\emptyset)=\int_{0}^{1}f(s){\rm d}s+\frac{1}{m}\s{y}{\emptyset}{1}\Sh{y}+\cb h(\emptyset).
$$

	Therefore, we have that $u$
	satisfies the recurrence
		\begin{equation}
			\label{solSE}
		\left\{
		\begin{array}{l}
			\displaystyle
			u(x)=\frac{\cb-1}{\cb}u(\hat{x})+\frac{1}{\cb}\itx{x}+\frac{1}{\cb}\Sh{x},\;\;x\not=\emptyset,\\[10pt]
			\displaystyle u(\emptyset)=\int_{0}^{1}f(s){\rm d}s+\frac{1}{m}\s{y}{\emptyset}{1}\Sh{y}+\cb h(\emptyset).
		\end{array}
		\right.
	\end{equation} 
	If we iterate this recurrence up to the root we get the following formula for $u$; given $x\in\T$ with $|x|=k$ for $k\geq 1$,
	\begin{equation}
		\label{Formuladeu}
		\begin{array}{ll}
			\displaystyle u(x) \! = \! \Big(\frac{\beta}{1-\beta}\Big)^k\int_0^1 f(s) ds+\frac{1}{c_{\beta}}\sum_{j=1}^{k}\Big(\frac{\beta}{1-\beta}\Big)^{k-j}\itx{x^j}\\[10pt]
			\displaystyle\quad \qquad+\Big(\frac{\beta}{1-\beta}\Big)^k \! \Big[\frac{1}{m}\!\sum_{y\in S^1(\emptyset)}S_h(y)+c_{\beta}h(\emptyset)\Big]\!+\! \frac{1}{c_{\beta}}
			\!\sum_{j=1}^{k}
			\Big(\frac{\beta}{1-\beta}\Big)^{k-j}\! S_h(x^j)
		\end{array}	
	\end{equation}
	where $x^j$ denotes the predecessor of $x$ with $|x^j|=j$, that is, in terms of the notation given in the introduction, for a node $x$ with $|x|=k$, we have $x=(a_1,\dots,a_j,\dots,a_k)$ with $0\leq a_j\leq m-1$, then $x^j=(a_1,\dots,a_j)$. 
		
		This function $u$ given by \eqref{Formuladeu} is well-defined, finite, and solves que equation $L(u)(x)= h(x)$ for
	$ x\in \T$.
	
	Now we just take $f\equiv 0$ and we obtain that $u$, given by
	\begin{equation}
		\label{Formuladeu-f=0}
		\begin{array}{ll}
			\displaystyle u(x)  = \Big(\frac{\beta}{1-\beta}\Big)^k \! \Big[\frac{1}{m}\!\sum_{y\in S^1(\emptyset)}S_h(y)+c_{\beta}h(\emptyset)\Big]\!+\! \frac{1}{c_{\beta}}
			\!\sum_{j=1}^{k}
			\Big(\frac{\beta}{1-\beta}\Big)^{k-j}\! S_h(x^j)
		\end{array}	
	\end{equation}
	must satisfy 
	$$
\lim_{x\in\T, |x| \to \infty } u(x) =0	
	$$
	uniformly. Since $\beta < \frac12$,
	it is clear that 
	$$
	\lim_{k \to \infty } \Big(\frac{\beta}{1-\beta}\Big)^k \! \Big[\frac{1}{m}\!\sum_{y\in S^1(\emptyset)}S_h(y)+c_{\beta}h(\emptyset)\Big] =0.
	$$
	Hence, we must have 
	$$
	\lim_{k \to \infty }  \sum_{j=1}^{k}
			\Big(\frac{\beta}{1-\beta}\Big)^{k-j}\! S_h(x^j) = 0
	$$
	as we wanted to show. 
	
	\medskip

	{\bf Proof of the claim \eqref{g7}.}
	We  will prove by induction in $N$ that
	\begin{equation*}
	\lim\limits_{N\to\infty}\sum_{k=1}^{N}\s{y}{x}{k}\frac{w(y)}{m^{k}} =\itx{x}-u(x).
	\end{equation*}
	For $N=2$ we have that 
	\begin{align*}
	\sum_{k=1}^{2}& \s{y}{x}{k}\frac{w(y)}{m^{k}} =\s{y}{x}{1}\frac{w(y)}{m^{1}}+\s{y}{x}{2}\frac{w(y)}{m^{2}}\\
	&=\frac{1}{m}\Big(\s{y}{x}{1}\Big(u(y)-u(x)\Big)\Big)
 +\frac{1}{m^{2}}\Big(\s{y}{x}{1}\s{z}{y}{1}\Big(u(z)-u(y)\Big)\Big)\\
	&= \frac{1}{m}\Big(\s{y}{x}{1}u(y)\Big)-u(x) 
	+\frac{1}{m^{2}}\Big(\s{y}{x}{2}u(y)
	\Big)-\frac{1}{m}\Big(\s{y}{x}{1}u(y)\Big)\\
	&=\frac{1}{m^{2}}\Big(\s{y}{x}{2}u(y)
	\Big)-u(x).
	\end{align*}
	Now, our inductive hypothesis is
	\begin{equation*}
	\sum_{k=1}^{N}\s{y}{x}{k}\frac{w(y)}{m^{k}} =\frac{1}{m^{N}}\Big(\s{y}{x}{N}u(y)
	\Big)-u(x),
	\end{equation*}
	and we have to prove that for $N+1$, 
	\begin{align*}
	\sum_{k=1}^{N+1}& \s{y}{x}{k}\frac{w(y)}{m^{k}} =\s{y}{x}{N+1}\frac{w(y)}{m^{N+1}}+\frac{1}{m^{N}}\Big(\s{y}{x}{N}u(y)
	\Big)-u(x)\\
	&=\frac{1}{m^{N+1}}\Big(\s{y}{x}{N}\s{z}{y}{1}\Big(u(z)-u(y)\Big)\Big)+\frac{1}{m^{N}}\Big(\s{y}{x}{N}u(y)
	\Big)-u(x)\\
	&=\frac{1}{m^{N+1}}\Big(\s{y}{x}{N+1}u(y)\Big)-\frac{1}{m^{N}}\Big(\s{y}{x}{N}u(y)\Big)+\frac{1}{m^{N}}\Big(\s{y}{x}{N}u(y)
	\Big)-u(x)\\
	&=\frac{1}{m^{N+1}}\Big(\s{y}{x}{N+1}u(y)\Big)-u(x).
	\end{align*}
	Since $u$ have supposed that $u|_{\partial \pi}=f$, we have that 
	$$\lim\limits_{N\to \infty}\frac{1}{m^N}\s{y}{x}{N}u(y)=\itx{x}$$ due to the fact in the definition of $u|_{\partial \pi}=f$ the convergence is uniform.
	This ends the proof of the claim. 
	
	\medskip
	
	Conversely, assuming that the conditions on $\beta$ and $h$, \eqref{cond.una.ecuacion.intro} hold, from our previous computations, 
	\eqref{Formuladeu} gives us a way to construct solutions to our problem.
	In fact, we can define the function $u:\T \mapsto
	\mathbb{R}$ given by 
	\begin{equation}
		\label{Formuladeu.66}
		\begin{array}{ll}
			\displaystyle u(x) \! = \! \Big(\frac{\beta}{1-\beta}\Big)^k\int_0^1 f(s) ds+\frac{1}{c_{\beta}}\sum_{j=1}^{k}\Big(\frac{\beta}{1-\beta}\Big)^{k-j}\itx{x^j}\\[10pt]
			\displaystyle\quad \qquad+\Big(\frac{\beta}{1-\beta}\Big)^k \! \Big[\frac{1}{m}\!\sum_{y\in S^1(\emptyset)}S_h(y)+c_{\beta}h(\emptyset)\Big]\!+\! \frac{1}{c_{\beta}}
			\!\sum_{j=1}^{k}
			\Big(\frac{\beta}{1-\beta}\Big)^{k-j}\! S_h(x^j)
		\end{array}	
	\end{equation}
	where, as before, $x^j$ denotes the predecessor of $x$ with $|x^j|=j$. 
	
	One can check that $u$, given by the explicit expression \eqref{Formuladeu.66}, satisfies
	\begin{equation}
	 Lu(x)=h(x), \qquad x\in \T.
	 	\end{equation} 
	
	Now, let us check that, when we have the solvability conditions
	\begin{equation} \label{cond.una.ecuacion.88}
		\begin{array}{l}	
		\displaystyle 0\leq \beta < \frac12, \\[10pt] 
				\displaystyle	
				\lim_{x\in \T, |x|=k \to \infty} \sum_{j=1}^{k}
			\Big(\frac{\beta}{1-\beta}\Big)^{k-j}\! S_h(x^j) = 0,
			\end{array}
		\end{equation}
	it holds that
	$$
	\lim_{x\rightarrow z\in\partial\T}u(x)=f(\psi(z))
	$$
	uniformly. 
	
	From \eqref{Formuladeu.66} 
	we get 
	\begin{equation}
		\label{Formuladeu.88}
		\begin{array}{ll}
			\displaystyle u(x) -  f(\psi(z))
\\[10pt]
			\displaystyle\quad \qquad= \! \Big(\frac{\beta}{1-\beta}\Big)^k\int_0^1 f(s) ds+\frac{1}{c_{\beta}}\sum_{j=1}^{k}\Big(\frac{\beta}{1-\beta}\Big)^{k-j}\itx{x^j}- f(\psi(z))
\\[10pt]
			\displaystyle\qquad \qquad+\Big(\frac{\beta}{1-\beta}\Big)^k \! \Big[\frac{1}{m}\!\sum_{y\in S^1(\emptyset)}S_h(y)+c_{\beta}h(\emptyset)\Big]\!+\! \frac{1}{c_{\beta}}
			\!\sum_{j=1}^{k}
			\Big(\frac{\beta}{1-\beta}\Big)^{k-j}\! S_h(x^j).
		\end{array}	
	\end{equation}
	
	Since $\beta < \frac12$, it holds that 
	$$
	\lim_{k \to \infty} \Big(\frac{\beta}{1-\beta}\Big)^k\int_0^1 f(s) ds =0
	$$
	and also 
	$$
	\lim_{k \to \infty} \Big(\frac{\beta}{1-\beta}\Big)^k \! \Big[\frac{1}{m}\!\sum_{y\in S^1(\emptyset)}S_h(y)+c_{\beta}h(\emptyset)\Big]\! = 0.
	$$
	
	Now, given $\gamma>0$, since $f$ is continuous, we have that 
	there exists $k_0$ such that
	$$
	\left| \itx{x^j} - f(\psi(z))\right| < \gamma
	$$
	for every $j>k_0$. Now, we use that $$\cb=\sum_{l=0}^{\infty}\Big(\frac{\bt}{1-\bt}\Big)^{l}$$
	and that $f$ is bounded to obtain 
	\begin{equation}
		\label{Formuladeu.88}
		\begin{array}{ll}
		\displaystyle 
			\left| \frac{1}{c_{\beta}}\sum_{j=1}^{k}\Big(\frac{\beta}{1-\beta}\Big)^{k-j}\itx{x^j}- f(\psi(z)) \right| 
			\\[10pt]
	\displaystyle		\qquad = 
	\left| \frac{1}{c_{\beta}}\sum_{j=1}^{k}\Big(\frac{\beta}{1-\beta}\Big)^{k-j}\itx{x^j}- 
	\frac{1}{c_{\beta}}\sum_{l=0}^{\infty}\Big(\frac{\beta}{1-\beta}\Big)^{l}f(\psi(z)) \right|
				\\[10pt]
	\displaystyle		\qquad \leq 
	C \sum_{j=1}^{k_0-1}\Big(\frac{\beta}{1-\beta}\Big)^{k-j} \\[10pt]
	\displaystyle		\qquad \qquad +
	\left| \frac{1}{c_{\beta}}\sum_{l=0}^{k-k_0}\Big(\frac{\beta}{1-\beta}\Big)^{l}\itx{x^{k-l}}- 
	\frac{1}{c_{\beta}}\sum_{l=0}^{\infty}\Big(\frac{\beta}{1-\beta}\Big)^{l}f(\psi(z)) \right|
\\[10pt]
	\displaystyle		\qquad\leq 
	C \sum_{j=1}^{k_0-1}\Big(\frac{\beta}{1-\beta}\Big)^{k-j} + 
	\gamma \frac{1}{c_{\beta}}\sum_{l=0}^{k-k_0}\Big(\frac{\beta}{1-\beta}\Big)^{l} 
	+  C \frac{1}{c_{\beta}}\sum_{l=k-k_0+1}^{\infty}\Big(\frac{\beta}{1-\beta}\Big)^{l} 
	\leq \frac{\varepsilon}{2},
		\end{array}	
	\end{equation}
	for $\gamma$ small enough and $k$ large enough. 
Hence, we have that
$$
\lim_{k\to \infty} \left| \frac{1}{c_{\beta}}\sum_{j=1}^{k}\Big(\frac{\beta}{1-\beta}\Big)^{k-j}\itx{x^j}- f(\psi(z)) \right|  = 0.
$$
	
	Finally, the condition
	$$
	\lim_{x\in \T, |x|=k \to \infty} \sum_{j=1}^{k}
			\Big(\frac{\beta}{1-\beta}\Big)^{k-j}\! S_h(x^j) = 0,
	$$
	is just what we need to ensure that 
	$$
	\lim_{x\rightarrow z\in\partial\T}u(x)=f(\psi(z)).
	$$
	This shows that there exists a solution. Uniqueness follows for our previous arguments. Note that if $u$ is a solution, $u$ is given by \eqref{Formuladeu}. It fact \eqref{Formuladeu} is a representation formula for the solution in terms of $h$ and $f$. 
	 Uniqueness also 
	follows form the comparison principle
	(see Lemma \ref{lema-compar} below). 
	\end{proof}
	
	\begin{remark}
		Under our hypothesis on $\beta$ and $h$, for $f:[0,1]\rightarrow\R$ continuous the function $w_1:\T\rightarrow\R$ defined as
		\begin{equation}
			\begin{array}{ll}
			\displaystyle w_1(x)=\Big(\frac{\beta}{1-\beta}\Big)^k\Big[\frac{1}{m}\sum_{y\in S^1(\emptyset)}S_h(y)+c_{\beta}h(\emptyset)\Big] 
			 \\[10pt]
			\displaystyle \qquad \qquad \qquad+\frac{1}{c_{\beta}}\sum_{j=1}^{k}
			\Big(\frac{\beta}{1-\beta}\Big)^{k-j}S_h(x^j), \quad x\neq\emptyset, |x|=k, 
			\\[10pt]
			\displaystyle w_1(\emptyset)=\frac{1}{m}\s{y}{\emptyset}{1}\Sh{y}+\cb h(\emptyset),
			\end{array}	
		\end{equation}
	is the solution to
	\begin{equation}
		\left\{
		\begin{array}{l}
			\displaystyle 
			L(w_1)(x) = h(x), \qquad \;x\in \T,\\[10pt]
			\displaystyle   	\lim_{x\rightarrow z\in\partial\T}w_1(x)= 0. 
		\end{array} \right.
	\end{equation}
	Moreover, the function $w_2:\T\rightarrow\R$ defined as 
	\begin{equation}
		\begin{array}{ll}
			\displaystyle 	w_2(x)=\Big(\frac{\beta}{1-\beta}\Big)^k\int_0^1 f(s) ds+\frac{1}{c_{\beta}}\sum_{j=1}^{k}\Big(\frac{\beta}{1-\beta}\Big)^{k-j}\itx{x^j}, \quad x\neq\emptyset, |x|=k,
			\\[10pt]
			\displaystyle w_2(\emptyset)=\int_{0}^{1}f(s){\rm d}s,
		\end{array}
	\end{equation}
	is the solution to
\begin{equation}
	\left\{
	\begin{array}{l}
		\displaystyle 
		L(w_2)(x)=0, \;\;x\in \T,\\[10pt]
		\displaystyle
		\lim_{x\rightarrow z\in\partial\T}w_2(x)= f(\psi(z)). 
	\end{array} \right.
\end{equation}

Therefore, we can write 
$$
u(x) = w_1(x) + w_2(x)
$$
being $w_1$ the solution to \eqref{EQL} for a given $h$ with $f=0$ and $w_2$ the solution for a given $f$ with $h=0$.
	\end{remark}

	\begin{remark} \label{rem.33}
	Notice that analogous arguments allows us to show that, under the same conditions, we can build supersolutions as large as we want
	enlarging $h$ (and also large subsolutions) 
	to the equation that satisfy the boundary condition
	$$
	\lim_{x\rightarrow z\in\partial\T}u(x)= f(\psi(z)).
    $$
To this end, consider $L$ defined as in \eqref{L1} and \eqref{L2} and $h_{1}, h_{2}:\T \rightarrow \mathbb{R}$ two functions in the tree such that $h_{1}\leq h_{2}$ and $f$ a boundary continuous function.  Moreover, suppose that  $\beta$, $h_{1}$ and $h_{2}$ satisfies the solvability condition. Let $u_{i}$ be 
the unique solution for the Dirichlet problem of the single equation asociated to the operator $L-h_i$ and boundary function $f$, that is, $u_{i}$ is given by \eqref{solSE} for $i=1,2$. Then, $u_{1}$ a supersolution the Dirichlet problem with $L-h_2$ and  $u_{2}$ is a subsolution for  the Dirichlet problem with $L-h_1$ and $f$. Now, notice that we can make $u_1$
as large as we want in a finite number of nodes just taking $h_1 (\emptyset)$ large enough. 
\end{remark}

	Let us prove the uniqueness for solutions to \eqref{EQL}. To this end we prove that the operator $L$ verifies the comparison principle.
	
	\begin{lemma}{(Comparison Principle)}
	\label{lema-compar}
		Given $u,v:\T\rightarrow\R$ such that
		\begin{equation}
			\left\lbrace
			\begin{array}{ll}
				\displaystyle L(u)(x)\geq h(x), \quad  & x\in \T, \\[10pt]
				\displaystyle \liminf_{x\rightarrow z\in\partial\T}u(x) \geq f(\psi(z)).
			\end{array}
			\right.
		\end{equation}
	and 
	\begin{equation}
		\left\lbrace
		\begin{array}{ll}
			\displaystyle L(v)(x)\leq h(x), \quad  & x\in \T, \\[10pt]
			\displaystyle \limsup_{x\rightarrow z\in\partial\T}v(x) \leq g(\psi(z)).
		\end{array}
		\right.
	\end{equation}
Then, if $f\geq g$ in $[0,1]$, we get $$u(x)\geq v(x), \qquad x \in\T.$$
	\end{lemma}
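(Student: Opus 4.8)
The plan is to argue by contradiction using the interior equation to propagate a positive supremum of $v-u$ down the tree toward the boundary, where the boundary hypotheses force a contradiction. First I would set $w := v - u$ and observe that $w$ is a subsolution in the sense that $L(w)(x) = L(v)(x) - L(u)(x) \leq 0$ for every $x \in \T$, since $L$ is linear. Writing out what $L(w)(x) \leq 0$ means, at any node $x \neq \emptyset$ we get
\begin{equation*}
w(x) \leq \beta\, w(\hat{x}) + (1-\beta)\Big(\frac{1}{m}\sum_{y \in S^1(x)} w(y)\Big),
\end{equation*}
and at the root $w(\emptyset) \leq \frac{1}{m}\sum_{y \in S^1(\emptyset)} w(y)$. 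The boundary hypotheses give $\limsup_{x \to z} w(x) \leq g(\psi(z)) - f(\psi(z)) \leq 0$ for every $z \in \partial\T$, and since this $\limsup$ is taken in the uniform sense (as in the definition of the boundary condition), for every $\varepsilon > 0$ there is $K$ with $w(x) < \varepsilon$ for all $|x| \geq K$.

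Suppose for contradiction that $M := \sup_{x \in \T} w(x) > 0$. Fix $\varepsilon$ with $0 < \varepsilon < M$; then by the uniform boundary bound the supremum $M$ is attained (or approached) only among the finitely many nodes with $|x| < K$, hence $M$ is in fact a maximum, attained at some node $x_{*}$. The key step is then a strong-maximum-principle type argument: at $x_{*}$ the subsolution inequality reads $M = w(x_{*}) \leq \beta\, w(\widehat{x_{*}}) + (1-\beta)\frac{1}{m}\sum_{y \in S^1(x_{*})} w(y)$, and since every term on the right is $\leq M$ while the coefficients $\beta, (1-\beta)/m, \dots$ are nonnegative and sum to $1$, equality is forced, so $w = M$ at $\widehat{x_{*}}$ (if $x_{*} \neq \emptyset$) and at every $y \in S^1(x_{*})$. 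Iterating this downward from $x_{*}$ through successors, and (if needed) one step toward the predecessor and then downward again, we conclude $w \equiv M$ on an infinite subtree $\T^{x_{*}}$ (or on $\T^{\widehat{x_{*}}}$). But $\T^{x_{*}}$ contains nodes of arbitrarily large level, contradicting $w(x) < \varepsilon < M$ for $|x| \geq K$. Hence $M \leq 0$, i.e. $v \leq u$ on all of $\T$.

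The main obstacle I anticipate is handling the root correctly and making sure the downward propagation actually reaches arbitrarily deep levels regardless of where $x_{*}$ sits: if $x_{*} = \emptyset$ the root equation only involves successors, which is fine; if $x_{*} \neq \emptyset$ one uses the full equation including the predecessor term, but one must be careful that forcing $w(\widehat{x_{*}}) = M$ is legitimate (it is, because $\beta$ could be $0$, in which case the predecessor term drops out and one simply propagates to the successors — either way the successors are forced to value $M$, and that alone gives an infinite subtree of nodes equal to $M$). A cleaner alternative that avoids case analysis: since $\beta < \frac12$ we have $(1-\beta)/m \cdot m = 1 - \beta > 0$, so from $L(w)(x_{*}) \leq 0$ with $w(x_{*}) = M$ maximal we directly get $w(y) = M$ for all $y \in S^1(x_{*})$, and then induct on $S^l(x_{*})$ to reach level $|x_{*}| + l$ for every $l$, contradicting the uniform decay near the boundary. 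I would also remark that, combined with the representation formula \eqref{Formuladeu}, this lemma yields uniqueness of the bounded solution to \eqref{EQL}, completing the proof of Theorem \ref{teo.intro.una.eacuacion}.
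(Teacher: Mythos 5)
Your proof is correct and reaches the same conclusion by a genuinely similar contradiction idea, but the mechanism differs from the paper's and leans on one extra assumption the paper's own argument avoids. The paper works with $\theta := \sup_{\T}(v-u)>0$ and a \emph{near}-maximizer: it selects the smallest level $k$ at which $(v-u)\geq\theta/2$, uses the resulting strict inequality $(v-u)(\hat{x_0})<\theta/2\leq(v-u)(x_0)$ in the subsolution inequality to produce a successor $x_1$ with $(v-u)(x_1)\geq(v-u)(x_0)$, and then iterates to follow a single branch toward $\partial\T$ on which $(v-u)\geq\theta/2$ persists; this contradicts the boundary $\limsup/\liminf$ hypotheses along that one branch. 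Your version first establishes that the supremum $M$ is \emph{attained} at some node $x_*$, then invokes a strong-maximum-principle step forcing $w\equiv M$ on all of $\T^{x_*}$, and derives the contradiction from decay at large levels. The strong-maximum step is clean, but attainment of $M$ rests on your claim that $\limsup_{x\to z}w(x)\leq 0$ holds \emph{uniformly}, giving $w(x)<\varepsilon$ for all $|x|\geq K$. The paper declares uniformity only for the full boundary limit $\lim_{x\to z}u(x)=f(\psi(z))$ (Definition 1); whether the one-sided $\limsup/\liminf$ conditions appearing in the Lemma's hypotheses are intended to be uniform is at best implicit, and a pointwise reading does not by itself make the sup attained. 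The paper's near-maximizer/branch-propagation device sidesteps this issue entirely, which makes it slightly more robust. If you wish to keep your variant under the weakest pointwise reading of the hypotheses, replace attainment by the near-sup argument (pick $x_0$ at the first level where $(v-u)\geq\theta/2$), or else make explicit that you are taking the boundary $\limsup$ uniformly. Your closing remark about uniqueness via the representation formula is consistent with the paper, which notes the same consequence.
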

	\begin{proof}
		Let us argue by contradiction, i.e., assume that
		$$
		\sup_{x\in\T}(v-u)(x)=\theta>0.
		$$
		 Let us suppose that there exists $k\geq 1$, and $|x_0|=k$ such that $(v-u)(x_0)\geq \frac{\theta}{2}$ and $(v-u)(y)<\frac{\theta}{2}$ for all $|y|<k$. At that point we have
		\[
			L(v) (x_0) - L(u)(x_0)\leq 0.
		\]
		Solving this inequality we arrive to
		\[
		\begin{array}{ll}
		\displaystyle 0<\frac{\theta}{2}\leq (v-u)(x_0)\leq \beta(v-u)(\hat{x_0})+\frac{1-\beta}{m}\sum_{y\in S(x_0)}(v-u)(y) \\
		\displaystyle \qquad \leq  \beta(v-u)(x_0)+\frac{1-\beta}{m}\sum_{y\in S(x_0)}(v-u)(y),
		\end{array}
		\]
		then
		\[
		\displaystyle (v-u)(x_0)\leq \frac{1}{m}\sum_{y\in S(x_0)}(v-u)(y).
		\]
		This implies that there exists $x_1\in S(x_0)$ such that $(v-u)(x_1)\geq (v-u)(x_0)$. If we repeat this argument we will obtain a sequence $(x_n)_{n\geq 0}$ such that $x_{n+1}\in S(x_n)$ and $(v-u)(x_{n+1})\geq (v-u)(x_n)$. Thus, we get
		\[
		\begin{array}{l}
		\displaystyle 
		0<\frac{\theta}{2}\leq \liminf_{x_k\rightarrow z\in\partial\T}(v-u)(x_k)\\[10pt]
		\displaystyle \qquad \leq \limsup_{x_k\rightarrow z\in\partial\T}v(x_k)-\liminf_{x_k\rightarrow z\in\partial\T}u(x_k) \leq g(\psi(z))-f(\psi(z))\leq 0,
		\end{array}
		\]
		which is a contradiction. Now, let us consider the case $$(v-u)(\emptyset)\geq \frac{\theta}{2}.$$ In this case we have
		\[
		0<\frac{\theta}{2}\leq (v-u)(\emptyset)\leq \frac{1}{m}\sum_{y\in S(\emptyset)}(v-u)(y),
		\]
		and this implies that there exists $x_1\in S(\emptyset)$ such that $(v-u)(x_1)\geq(v-u)(\emptyset)$. If from this point we argue as before we obtain a contradiction. This ends the proof.
	\end{proof}

	
	\section{The two membranes problem.} 
	\label{sect-membranes}
	
	\subsection{The obstacle problem on trees.}

	We defined a solution for the obstacle problem from below and from above
	in the introduction, see Definition \ref{def.obs.intro}.
	Now, we claim that
	the problem can be regarded from two perspectives.

	\begin{definition}
			Given an operator $L$ defined as before and a function $h : \T  \rightarrow \mathbb{R}$,
			such that the conditions for
			solvability, \eqref{cond.una.ecuacion.intro}, hold, $\varphi:\T\rightarrow\R$ a bounded function, and $f:[0,1]\rightarrow\R$ a continuous function such that $$\limsup_{x\rightarrow z\in\partial\T}\varphi(x)<f(\psi(z)),$$ we say that $u$ is a solution to the obstacle problem from below, and we note $u=\ol(L,h,\varphi,f)$ if $u$ verifies
			\begin{equation} \label{obst.u.sect}
	\displaystyle u(x) = \inf \left\{ w(x) : 
	\begin{array}{l} L(w)(x)\geq h(x), \quad x\in \T, \\[6pt]
\displaystyle w(x) \geq \varphi (x) , \quad x\in \T, \\[6pt]
	 \displaystyle \mbox{ and } \liminf_{x\rightarrow z\in\partial\T}w(x)\geq f(\psi(z))
	 \end{array} \right\},	
			\end{equation}
that is equivalent to
		\begin{equation}
			\label{DefObstbelow}
		\left\lbrace
		\begin{array}{ll}
		\displaystyle u(x)\geq \varphi(x), \quad  & x\in \T, \\[10pt]
		\displaystyle L(u)(x)\geq h(x), \quad & x\in \T, \\[10pt]
		\displaystyle L(u)(x) = h(x), \quad  & x\in \{u>\varphi\},\\[10pt]
		\displaystyle \lim_{x\rightarrow z\in\partial\T}u(x)= f(\psi(z)).
		\end{array}
		\right.
		\end{equation}
		Notice that \eqref{DefObstbelow} can be written as
		$$
		\left\{
		\begin{array}{l}
		\displaystyle
			0=
\max\Big\{  -L (u)({x})+h(x),
\varphi (x)-u(x)  \Big\},\quad x\in \T,
\\[10pt]
		\displaystyle \lim_{x\rightarrow z\in\partial\T}u(x)= f(\psi(z)).
\end{array}
\right. $$
		
		Analogously, for $g:[0,1]\rightarrow\R$ and $\varphi:\T\rightarrow\R$ such that $$\liminf_{x\rightarrow z\in\partial\T}\varphi(x)>g(\psi(z))$$ we define $v$ a solution from above and denote 
		$v=\ou(L,h,\varphi,g)$ if
		\begin{equation} \label{obst.v.sect}
	\displaystyle v(x) = \sup \left\{ w(x) : 
	\begin{array}{l} L(w)(x)\leq h(x), \quad x\in \T, \\[6pt]
	 \displaystyle w(x) \leq \varphi(x) , \quad x\in \T,
	 \\[6pt]
	 \displaystyle \mbox{ and } \limsup_{x\rightarrow z\in\partial\T}w(x)\leq g(\psi(z)) 
	 \end{array} \right\},
			\end{equation}
that is equivalent to
		\begin{equation}
			\label{DefObstabove}
		\left\lbrace
		\begin{array}{ll}
		\displaystyle v(x)\leq \varphi(x), \quad  & x\in \T, \\[10pt]
		\displaystyle L(v)(x)\leq h(x), \quad & x\in \T, \\[10pt]
		\displaystyle L(v)(x) = h(x), \quad  & x\in \{v<\varphi\},\\[10pt]
		\displaystyle \lim_{x\rightarrow z\in\partial\T}v(x)= g(\psi(z)),
		\end{array}
		\right.
		\end{equation}
	that can be written as
	$$
		\left\{
		\begin{array}{l}
		\displaystyle
			0=
\min\Big\{  -L( v)({x})+h(x),
\varphi (x)-v(x) \Big\},\quad x\in\T,
\\[10pt]
		\displaystyle \lim_{x\rightarrow z\in\partial\T}v(x)= g(\psi(z)).
\end{array}
\right. $$
\end{definition}
	
	Let us prove that both definitions (the one as inf/sup of super/subsolutions and the one solving inequalities) of being a solution to the obstacle problem
	are equivalent. 
	\begin{proposition}
		Given an operator $L$ defined as in \eqref{L1} and \eqref{L2}, and a function $h: \T \to \mathbb{R}$
		such that the conditions for
		solvability, \eqref{cond.una.ecuacion.intro}, hold, $\varphi:\T\rightarrow\R$ a bounded function, and $f:[0,1]\rightarrow\R$ a continuous function such that 
		$$
		\limsup_{x\rightarrow z\in\partial\T}\varphi(x)<f(\psi(z)),
		$$
		 the function defined by \eqref{obst.u.sect} is well defined and is solution to \eqref{DefObstbelow}. Conversely, a solution to \eqref{DefObstbelow}
		 is the minimizer in \eqref{obst.u.sect}.
		 We denote by $u=\ol(L,h,\varphi,f)$ the solution to 
		 \eqref{obst.u.sect} or to \eqref{DefObstbelow}.
		 
		 Analogously, given $g:[0,1]\rightarrow\R$ a continuous function such that 
		 $$
		 \liminf_{x\rightarrow z\in\partial\T}\varphi(x)>g(\psi(z)),
		 $$
		 the function defined by \eqref{obst.v.sect} is well defined and is solution to \eqref{DefObstabove}. 
		 Conversely, a solution to \eqref{DefObstabove}
		 is the maximizer in \eqref{obst.v.sect}.
		 We denote the solution to \eqref{DefObstabove}
		 or the maximizer in \eqref{obst.v.sect} by $v=\ou(L,h,\varphi,g)$.
	\end{proposition}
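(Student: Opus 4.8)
The plan is to treat the obstacle problem from below in detail; the case from above is entirely symmetric (swap $\inf\leftrightarrow\sup$, supersolutions $\leftrightarrow$ subsolutions, $f\leftrightarrow g$, reverse the inequalities, and invoke Lemma \ref{lema-compar} with the two functions interchanged). Throughout, let $\Psi$ be the unique bounded solution of $L(\Psi)=h$ with $\Psi|_{\partial\T}=f$ furnished by Theorem \ref{teo.intro.una.eacuacion} (applicable since \eqref{cond.una.ecuacion.intro} is assumed), and let $\mathcal{A}$ denote the admissible class in \eqref{obst.u.sect}, i.e.\ the $w:\T\to\R$ with $L(w)\ge h$ on $\T$, $w\ge\varphi$ on $\T$, and $\liminf_{x\to z}w(x)\ge f(\psi(z))$.

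First I would settle well-posedness. For $C\ge 0$ large enough the function $C+\Psi$ lies in $\mathcal{A}$: it solves $L(C+\Psi)=h$, it dominates the bounded obstacle $\varphi$, and $\liminf_{x\to z}(C+\Psi)(x)=C+f(\psi(z))\ge f(\psi(z))$. Since every $w\in\mathcal{A}$ satisfies $w\ge\varphi\ge\inf_\T\varphi>-\infty$, the function $u:=\inf_{w\in\mathcal{A}}w$ is well defined, finite and bounded, with $\varphi\le u\le C+\Psi$. Then I would verify the four conditions in \eqref{DefObstbelow}. That $u\ge\varphi$ is clear. That $L(u)\ge h$ is the usual fact that an infimum of supersolutions of $L(\cdot)=h$ is a supersolution, here a one-line rearrangement using $0\le\beta\le1$. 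For the boundary condition, comparing each $w\in\mathcal{A}$ with $\Psi$ via Lemma \ref{lema-compar} gives $w\ge\Psi$, hence $u\ge\Psi$ and $\liminf_{x\to z}u(x)\ge f(\psi(z))$; the matching upper bound is the delicate point discussed below. Finally, if $u(x_0)>\varphi(x_0)$ but $L(u)(x_0)>h(x_0)$ at some node $x_0$, I would lower $u$ at the single node $x_0$ by a small $\varepsilon>0$: the value at $x_0$ enters $L(\cdot)(x_0)$, $L(\cdot)(\hat x_0)$ and $L(\cdot)(y)$ for $y\in S^1(x_0)$ with coefficients $1$, $-\tfrac{1-\beta}{m}$ (or $-\tfrac1m$ when $\hat x_0=\emptyset$) and $-\beta$ respectively, all of the right sign, so the perturbed $\tilde u$ is still a supersolution, still $\ge\varphi$, and has the same boundary limits, i.e.\ $\tilde u\in\mathcal{A}$; since $\tilde u(x_0)<u(x_0)=\inf_{w\in\mathcal{A}}w(x_0)$ this contradicts minimality, so $L(u)=h$ on $\{u>\varphi\}$. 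The reformulation $0=\max\{-L(u)+h,\varphi-u\}$ with $\lim u=f$ is then just a rewriting.

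For the converse, if $u$ solves \eqref{DefObstbelow} then $u\in\mathcal{A}$, so $u\ge\inf\mathcal{A}$; to obtain $u\le\inf\mathcal{A}$ I would show $u\le w$ for each $w\in\mathcal{A}$ by the argument of Lemma \ref{lema-compar} adapted to the obstacle. Supposing $\theta:=\sup_\T(u-w)>0$ (finite, since $u$ is bounded and $w\ge\inf_\T\varphi$) and noting $\limsup_{x\to z}(u-w)(x)\le f(\psi(z))-f(\psi(z))=0<\theta$, one finds, as in that proof, a node $x_0$ with $(u-w)(x_0)\ge\theta/2$ and $(u-w)(y)<\theta/2$ at its ancestors (the root case being handled the same way). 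There $u(x_0)>w(x_0)\ge\varphi(x_0)$, so $L(u)(x_0)=h(x_0)\le L(w)(x_0)$, i.e.\ $L(u-w)(x_0)\le0$, which (using $(u-w)(\hat x_0)<(u-w)(x_0)$) forces $(u-w)(x_0)\le\tfrac1m\sum_{y\in S^1(x_0)}(u-w)(y)$, pushing the (near-)maximum to a successor; iterating produces a branch along which $u-w$ stays $\ge\theta/2$, contradicting $\limsup_{x\to z}(u-w)(x)\le0$. Hence $u=\inf\mathcal{A}$, which is \eqref{obst.u.sect}.

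I expect the only genuinely delicate step to be the missing upper bound on $u$ at the boundary. The idea is to produce a competitor attaining $f$ exactly. I would first show that the set $F:=\{x\in\T:\varphi(x)\ge\Psi(x)\}$ is finite, using $\limsup_{x\to z}\varphi(x)<f(\psi(z))$, the continuity of $f$, the uniform attainment of the boundary datum by $\Psi$, and the sequential compactness of $\partial\T$: a sequence $|x_n|\to\infty$ with $\varphi(x_n)\ge\Psi(x_n)$ would, along a subsequence with stabilizing coordinates, force $\limsup\varphi\ge f$ at some boundary point. Then, by Remark \ref{rem.33}, choosing $h_1\ge 0$ supported at the root with $h_1(\emptyset)$ large enough, I would obtain a nonnegative solution $\eta$ of $L(\eta)=h_1$ with $\eta|_{\partial\T}=0$ that tends to $0$ uniformly and exceeds $\max_{x\in F}(\varphi(x)-\Psi(x))$ on $F$; hence $\Psi+\eta\in\mathcal{A}$, so $\Psi\le u\le\Psi+\eta$ and $u\to f$ uniformly on $\partial\T$. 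Everything else reduces to the comparison principle of Lemma \ref{lema-compar} together with Theorem \ref{teo.intro.una.eacuacion}.
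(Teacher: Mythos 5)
Your overall architecture matches the paper's: define the admissible class $\underline{\Lambda}_{f,h,\varphi}$, show it is non-empty and bounded below, verify the infimum $u$ satisfies $u\geq\varphi$, $L(u)\geq h$, $L(u)=h$ on $\{u>\varphi\}$ via the one-node downward perturbation, and prove the converse by a comparison argument that tracks the near-maximum of $u-w$ down a branch. The genuine divergence is in the step you correctly flag as delicate, the upper bound $\limsup_{x\to z}u(x)\leq f(\psi(z))$. The paper again argues by perturbation (``argue as before to make $u_0\in\underline{\Lambda}_{f,h,\varphi}$ with $u_0<u$''), but this is uncomfortable: at the node $x_0$ produced there one has $u(x_0)>\varphi(x_0)$, so $L(u)(x_0)=h(x_0)$ and the single-node lowering used earlier in the proof is not available. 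Your alternative route, sandwiching $u$ between $\Psi$ (comparison from below) and an explicit competitor $\Psi+\eta$ built via Remark \ref{rem.33}, is cleaner and more constructive: it replaces the perturbation contradiction with an honest barrier in $\underline{\Lambda}_{f,h,\varphi}$ that already attains the boundary datum $f$.

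Two caveats, both of which you should record carefully. First, the finiteness of $F=\{x:\varphi(x)\geq\Psi(x)\}$ is not a consequence of the literal pointwise hypothesis $\limsup_{x\to z}\varphi(x)<f(\psi(z))$ for each branch $z$; a bounded $\varphi$ equal to a large constant on a sparse sequence of nodes lying on pairwise distinct branches has $\limsup_{x\to z}\varphi(x)$ small on every branch, yet $F$ can be infinite. Your ``subsequence with stabilizing coordinates'' lands on a branch $z$ whose truncations $z^{k}$ need not be among the $x_{n}$'s, so it does not force $\limsup_{x\to z}\varphi\geq f(\psi(z))$. What one needs (and what the paper in fact uses when it writes ``there exists $k$ such that $|x|\geq k$ and $\varphi(x)<f(\psi(z))$'', and what holds in the application to Theorem \ref{teo.main.intro} because $f>g$ on a compact interval) is a uniform version: there exist $\delta>0$ and $K$ with $\varphi(x)\leq f(\psi(z))-\delta$ for all $|x|\geq K$ and $z\in\partial\T^{x}$. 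With that, combining it with the uniform attainment $|\Psi(x)-f(\psi(z))|<\delta/2$ for $|x|$ large immediately gives $\varphi<\Psi$ off a finite set, which is what your construction requires. Second, the device ``take $h_1$ supported at the root and large'' yields a strictly positive $\eta$ at deeper levels only when $\beta>0$; for $\beta=0$ the solution of $L(\eta)=h_1$, $\eta|_{\partial\T}=0$, vanishes off the support of $h_1$. The fix is minor (support $h_1$ on the finite set $F$, or on $\{|x|\leq\max_{F}|x|\}$), but it should be stated to cover the full range $0\leq\beta<\tfrac12$ allowed by \eqref{cond.una.ecuacion.intro}.
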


\begin{proof}
	Let us consider the set
	\begin{equation}
		\label{SETbelow}
		\underline{\Lambda}_{f,h,\varphi}=\Big\{w : L(w)\geq h \ , \ w\geq \varphi \ , \ \liminf_{x\rightarrow z\in\partial\T}w(x)\geq f(\psi(z))   \Big\}.	
	\end{equation}
	
	This set is non empty and is bounded from below. In fact, if we consider $M=\max\{\lVert f\rVert_{\infty},\lVert \varphi\rVert_{\infty} \}$, and $w_0$ the unique bounded solution to
	\begin{equation}
		\left\lbrace\begin{array}{ll}
			\displaystyle L(w_0)(x)=h(x), & x\in\T,\\[10pt]
			\displaystyle \lim_{x\rightarrow z\in\partial\T}w_0 =f(\psi(z)).
		\end{array}
		\right.
	\end{equation} 
	Note that $\underline{\Lambda}_{f,h,\varphi}$ is not empty due to the fact that the function $w_0+M\in\underline{\Lambda}_{f,h,\varphi}$. Moreover,  if $w\in\underline{\Lambda}_{f,h,\varphi}$, $w(x)\geq\varphi(x)\geq -M$, then, is bounded from below. Let us define
	\begin{equation}
		\label{caracu}
		u(x)=\inf_{w\in\underline{\Lambda}_{f,h,\varphi}}w(x).
	\end{equation}
	
	Let us show that this function $u$ verifies \eqref{DefObstbelow}. In fact, using that $w\geq \varphi$ for all $w\in\underline{\Lambda}_{f,h,\varphi}$, taking infimum we get $u\geq \varphi$. In addition, given $w\in\underline{\Lambda}_{f,h,\varphi}$ we have 
	\[
	L(w)(x)\geq h(x) \quad \Rightarrow \quad w(x)\geq \beta w(\hat{x})+(1-\beta)\Big(\frac{1}{m}\sum_{y\in S^1 (x)} w(y)\Big)+h(x), \quad x\neq\emptyset.
	\]
	As a consequence, if we take infimum in the right hand of the above inequalty we obtain
	\[
	w(x)\geq \beta u(\hat{x})+(1-\beta)\Big(\frac{1}{m}\sum_{y\in S^1 (x)} u(y)\Big)+h(x), \quad x\neq\emptyset.
	\]
	Furthermore,  taking infimum in the left hand 
	\[
	u(x)\geq \beta u(\hat{x})+(1-\beta)\Big(\frac{1}{m}\sum_{y\in S^1 (x)} u(y)\Big)+h(x) \quad \Rightarrow \quad L(u)(x)\geq h(x), \quad x\neq\emptyset.
	\]
	Analogously, we can do the same computation on the root $\emptyset$ and obtain
	\[
	u(\emptyset)\geq \Big(\frac{1}{m}\sum_{y\in S^1 (\emptyset)} u(y)\Big)+h(\emptyset) \quad \Rightarrow \quad L(u)(\emptyset)\geq h(x).
	\]
	Finally, we have $$\liminf_{x\rightarrow z\in\partial\T}w(x)\geq f(\psi(z))$$ for all $w\in\underline{\Lambda}_{f,h,\varphi}$, taking infimum we get $$\liminf_{x\rightarrow z\in\partial\T}u(x)\geq f(\psi(z)).$$ We just proved that $u\in\underline{\Lambda}_{f,h,\varphi}$. Let us prove now that $$L(u)(x)=0$$ if $x\in\{u>\varphi\}$. Suppose that this is not true, given $x_0\in\{u>\varphi\}$ such that $L(u)(x_0)> h(x_0)$, i.e. 
	$$u(x_0)>\beta u(\hat{x})+(1-\beta)\Big(\frac{1}{m}\sum_{y\in S^1 (x)} u(y)\Big)+h(x) .$$ Let us define $$\delta_1=u(x_0)-\beta u(\hat{x})+(1-\beta)\Big(\frac{1}{m}\sum_{y\in S^k (x)} u(y)\Big)+h(x), $$ $$\delta_2=u(x_0)-\varphi(x_0)$$ and $$\delta_0=\min\{\delta_1,\delta_2\}.$$ If we consider
	\[
	u_0(x)=\left\lbrace\begin{array}{ll}
		\displaystyle u(x), \quad & x\neq x_0,\\[8pt]
		\displaystyle u(x)-\frac{\delta_0}{2}, \quad & x=x_0. 
	\end{array}
	\right.
	\]     
	This function verifies $u_0\geq \varphi$, $$\liminf_{x\rightarrow z\in\partial\T}u_0(x)\geq f(\psi(z))$$ and $L(u_0)(x)\geq h(x)$. In fact, $L(u_0)(x_0)\geq h(x)$ and 
	then $L(u_0)(\hat{x_0})\geq h(\hat{x_0})$, and for $y_0\in S(x_0)$, $L(u_0)(y_0)\geq h(y_0)$. Thus, $u_0\in \underline{\Lambda}_{f,h,\varphi}$ and $u_0<u$ which is a contradiction. We have proved that $$L(u)(x)=h(x)$$ in the set $\{u>\varphi\}$. 
	
	Let us verify that $$\limsup_{x\rightarrow z\in\partial\T}u(x)\leq f(\psi(z)).$$ Suppose that this is not true, i.e. 
	$$\limsup_{x\rightarrow z\in\partial\T}u(x)> f(\psi(z)).$$ Using that $$\limsup_{x\rightarrow z\in\partial\T}\varphi(x) < f(\psi(z)),$$ there exists $k\in\mathbb{N}$ such that $|x|\geq k$ and $\varphi(x) < f(\psi(z))$. Then, there exists $x_0$ such that $|x_0|\geq k$ and $u(x_0)>f(\psi(z))$. We can argue as before to make a function $u_0\in\underline{\Lambda}_{f,h,\varphi}$ and $u_0<u$ which is a contradiction. 
	
	Conversely, assume that $u$ solves \eqref{DefObstbelow} and let us prove that
	$u$ is the minimizer in \eqref{obst.u.sect}. 
	Since $u$ solves \eqref{DefObstbelow} we have that it satisfies
	$L(u)(x)\geq h(x)$ for $x\in \T$, $
	\lim_{x\rightarrow z\in\partial\T}u(x) = f(\psi(z))$ and $w(x) \geq \varphi(x)$
	and then $u$ is a competitor in the minimization problem \eqref{obst.u.sect}.
	Therefore,
	$$
	u(x) \geq \inf_{w\in\underline{\Lambda}_{f,h,\varphi}}w (x).
	$$
	
	To prove the reverse inequality, call $z$ the minimizer, that is, 
	$$
	z(x) = \inf_{w\in\underline{\Lambda}_{f,h,\varphi}}w (x).
	$$ 
	Since we have that $u\geq z$ we get an inclusion for the sets where these
	functions touch the obstacle,
	$$
	\{x : u(x) =\varphi (x) \} \subset \{x : z(x) =\varphi (x) \}.
	$$
	Outside the set $\{x : u(x) =\varphi (x) \} $ we have that $u$ solves 
	$L(u)(x)= h(x)$ and $z$ is a supersolution to this equation, while the boundary conditions
	give
	$$
	\liminf_{x\rightarrow z\in\partial\T}z(x) \geq  f(\psi(z)) = 
	\lim_{x\rightarrow z\in\partial\T}u(x) .
	$$
	From the proof of the comparison principle, we get that
	$$
	u(x) \leq z(x),
	$$
	and we conclude that $u\equiv z$, the minimizer of \eqref{obst.u.sect}. 
	
	Analogously, we consider
	\begin{equation}
		\label{SETabove}
		\overline{\Lambda}_{g,h,\varphi}=\Big\{w : L(w)\leq h \ , \ w\leq \varphi \ , \ \limsup_{x\rightarrow z\in\partial\T}w(x)\leq g(\psi(z))  \Big\}.	
	\end{equation}
	
	This set is non empty and bounded from above. Then, we define
	\begin{equation}
		\label{caracv}
		v(x)=\sup_{w\in\overline{\Lambda}_{g,h,\varphi}}w(x),
	\end{equation}
	
	This function $v$ verifies \eqref{DefObstabove}, i.e $v=\ou(L,h,\varphi,g)$, 
	and conversely, a function that verifies  \eqref{DefObstabove} is given by
	\eqref{caracv}. This ends the proof.
\end{proof}

\begin{corollary}
	Given an operator $L$ defined as in \eqref{L1} and \eqref{L2},
	and a function $h:\T \to \mathbb{R}$
	such that the conditions for
	solvability, \eqref{cond.una.ecuacion.intro}, hold, $\varphi:\T\rightarrow\R$ a bounded function, and $f:[0,1]\rightarrow\R$ a continuous function. If  $$\limsup_{x\rightarrow z\in\partial\T}\varphi(x)<f(\psi(z)),$$ 
	there exists a unique solution for the obstacle problem from below, 	
	\begin{equation}
		\ol(L,h,\varphi,f)=\inf_{w\in\underline{\Lambda}_{f,h,\varphi}}w.
	\end{equation}
	where $\underline{\Lambda}_{f,h,\varphi}$ is defined in \eqref{SETbelow}.
	
	Analogously, if $$\liminf_{x\rightarrow z\in\partial\T}\varphi(x)>f(\psi(z)),$$ 
	there exists a unique solution for the obstacle problem form above,	
	\begin{equation}
		\ou(L,h,\varphi,g)=\sup_{w\in\overline{\Lambda}_{f,h,\varphi}}w.
	\end{equation}
	where $\overline{\Lambda}_{g,h,\varphi}$ is defined in \eqref{SETabove}.
\end{corollary}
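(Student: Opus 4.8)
The statement is essentially a repackaging of the preceding Proposition, so the plan is to read off existence, uniqueness and the infimum/supremum formula from it; no substantially new argument is needed.

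For the obstacle problem \emph{from below}: under the hypothesis $\limsup_{x\rightarrow z\in\partial\T}\varphi(x)<f(\psi(z))$ I would first note, exactly as in the proof of the Proposition, that the admissible class $\underline{\Lambda}_{f,h,\varphi}$ of \eqref{SETbelow} is nonempty --- the function obtained by adding a sufficiently large constant to the bounded solution $w_0$ of $L(w_0)=h$ with $w_0|_{\partial\T}=f$ (which exists by Theorem \ref{teo.intro.una.eacuacion}, precisely because the solvability hypothesis \eqref{cond.una.ecuacion.intro} is assumed) lies in it --- and that the class is bounded below, since every $w\in\underline{\Lambda}_{f,h,\varphi}$ satisfies $w\geq\varphi\geq -\|\varphi\|_\infty$. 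Hence $u(x):=\inf_{w\in\underline{\Lambda}_{f,h,\varphi}}w(x)$ is a well-defined real-valued function, and by the forward direction of the Proposition it solves \eqref{DefObstbelow}, i.e.\ it is a solution of the obstacle problem from below in the sense of \eqref{obst.u.sect}; by definition this is $\ol(L,h,\varphi,f)$.

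For uniqueness I would invoke the converse direction of the Proposition: it asserts that any solution of \eqref{DefObstbelow} coincides with the minimizer $\inf_{w\in\underline{\Lambda}_{f,h,\varphi}}w$, so two solutions of \eqref{DefObstbelow} must be equal. (If a self-contained argument is preferred, it is the comparison-principle computation already carried out in the proof of the Proposition: a solution of \eqref{DefObstbelow} is itself a member of $\underline{\Lambda}_{f,h,\varphi}$, hence lies above the infimum, while it also lies below the infimum, by comparing the two functions off the coincidence set via Lemma \ref{lema-compar}.)

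The obstacle problem \emph{from above} is handled symmetrically: under $\liminf_{x\rightarrow z\in\partial\T}\varphi(x)>g(\psi(z))$ one checks that the class $\overline{\Lambda}_{g,h,\varphi}$ of \eqref{SETabove} is nonempty --- subtract a sufficiently large constant from the bounded solution $w_0$ of $L(w_0)=h$ with $w_0|_{\partial\T}=g$ --- and bounded above by $\|\varphi\|_\infty$, so that $v(x):=\sup_{w\in\overline{\Lambda}_{g,h,\varphi}}w(x)$ is well-defined; the Proposition then gives that $v$ solves \eqref{DefObstabove} and is the unique such solution, i.e.\ $v=\ou(L,h,\varphi,g)$. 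I do not anticipate any genuine obstacle; the only point worth keeping in mind is that nonemptiness and one-sided boundedness of the admissible classes --- hence finiteness of the infimum and the supremum --- really do use the solvability hypothesis \eqref{cond.una.ecuacion.intro}, which is why it is carried in the statement.
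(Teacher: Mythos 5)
Your proposal is correct and follows essentially the same route as the paper: you show the admissible classes $\underline{\Lambda}_{f,h,\varphi}$ and $\overline{\Lambda}_{g,h,\varphi}$ are nonempty (shifting the bounded solution of the single equation, which exists by the solvability hypothesis) and one-sidedly bounded, so the infimum/supremum are well-defined; existence then comes from the forward direction of the Proposition and uniqueness from the fact that the definition is itself an infimum/supremum (the paper likewise notes uniqueness is ``a direct consequence of taking the infimum or the supremum on these sets'').
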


\begin{proof}
	 The existence of $\ol(L,h,\varphi,f)$ and of $\ou(L,h,\varphi,g)$ relies on the fact that the sets $\underline{\Lambda}_{f,h,\varphi}$  and $\overline{\Lambda}_{g,h,\varphi}$ are not empty, and there exists a $M>0$ sucht that $w\geq -M$ for all $w\in \underline{\Lambda}_{f,h,\varphi}$ and $v\leq M$ for all $v\in \overline{\Lambda}_{f,h,\varphi}$, as we showed in the proof of the equivalence of the definitions of being a solution to the obstacle problem. The uniqueness is a direct consequence of taking the infimum or the supremum on these sets.
\end{proof}


\subsection{The two membranes problem. Existence via iterations of the obstacle problem.}
\label{subsect-iteraciones} \ \\

Let us show that the two membranes problem in the tree has a solution. 

\begin{proof}[Proof of Theorem \ref{teo.main.intro}]
 Let us consider $L_{1}$ and $L_{2}$ two operators given by de mean value formula
 and $h_1,h_2: \T \to \mathbb{R}$ that verify the
 solvability condition with $\beta_1$, $h_1$ and $\beta_2$, $h_2$ respectively, and $f,g:[0,1]\rightarrow\R$ two continuous functions such that $f> g$. Let us start the method with $v_0$ a bounded subsolution of the operator with boundary condition $g$, that is,
\begin{equation}
	\begin{array}{ll}
		\displaystyle L_2(v_0)(x)\leq h_2 (x), & x\in\T, \\[10pt]
		\displaystyle \limsup_{x\rightarrow z\in\partial\T}v_0(x)\leq g(\psi(z)).
	\end{array}
\end{equation} 	
With this function $v_0$ we let $u_1$ be the solution to the obstacle problem from below
for the operator $L_{1}$, right hand side $h_1$, boundary datum $f$ and obstacle $v_0$, that is,
$$
u_1=\ol(L_{1},h_1,v_0,f).
$$
Notice that the set
$$
\underline{\Lambda}_{f,h_1v_0}=\Big\{w : L_1(w)\geq h_1 \ , \ w\geq v_0 \ , \ \liminf_{x\rightarrow z\in\partial\T}w(x)\geq f(\psi(z))   \Big\}
$$
is not empty, since by the results in Section \ref{sect-soluciones} we can construct a supersolution
to our problem with $L_1-h_1$ and boundary condition $f$ as large as we want (see Remark
\ref{rem.33}). Therefore, the function $u_1$ can be obtained solving the minimization 
problem $\inf\{w(x):\;w\in\underline{\Lambda}_{f,h_1v_0}\}.$

With this function $u_1$, we take
$$
v_1=\ou(L_{2},h_2,u_{1},g).
$$
Here, notice that the corresponding set 
$$
\overline{\Lambda}_{g,h_2,u_1}=\Big\{w : L_2(w)\leq h_2 \ , \ w\leq u_1 \ , \ \limsup_{x\rightarrow z\in\partial\T}w(x)\leq g(\psi(z))   \Big\}
$$
is not empty (here we recall that we can construct large subsolutions to $L_2-h_2$) and hence
$v_1$ can be obtained from the maximization problem 
$\sup\{w(x):\; w\in\overline{\Lambda}_{g,h_2,u_1}\}$

Now, we iterate this procedure and define
\begin{equation}
	u_n=\ol(L_{1},h_1,v_{n-1},f) \quad \mbox{ and } \quad v_n=\ou(L_{2},h_2,u_{n},g).
\end{equation}
In this way we obtain two sequences $\{u_n\}_{n\geq 1},\{v_n\}_{n\geq 1}$.
Our goal is to show that these sequences are monotone and that they converge to
a pair of functions $(u,v)$ that is a solution to the two membranes problem. 

\medskip

CLAIM \# 1: The sequences are increasing, i.e. $u_n\geq u_{n-1},v_n\geq v_{n-1}$. 

Let us start with $v_{n-1}$. By definition of being a solution to the obstacle problem,  $v_{n-1}$ satisfies $u_{n-1}\geq v_{n-1}$ , $L_{2}(v_{n-1})\leq h_2$ and $\lim_{x\rightarrow z\in\partial\T}v_{n-1}(x)= g(\psi(z))$. Hence, $v_{n-1} \in \overline{\Lambda}_{g,h_2,u_{n-1}}$. Then, using again the definition of being a solution to the obstacle problem we get $$v_n=\sup_{w\in\overline{\Lambda}_{g,h_2,u_{n-1}}}w\geq v_{n-1}.$$ 

On the other hand, $u_n\geq v_n\geq v_{n-1}$, $L_{1}(u_n)\geq h_1$ and $\lim_{x\rightarrow z\in\partial\T}u_{n}(x)= f(\psi(z))$, then, using one more time the definition of being a solution to the obstacle problem, we get $$u_{n-1}=\inf_{w\in\underline{\Lambda}_{f,h_1,v_{n-1}}}w\leq u_n.$$ 
This ends the proof of the claim.

\medskip

CLAIM \# 2: The sequences are bounded. 

Let us start with $\{u_n\}_{n\geq 1}$. Let $w$ be the solution to 
\begin{equation}
		\left\lbrace
		\begin{array}{ll}
			\displaystyle L_{2}(w)(x)= h_2 (x), \quad  & x\in \T,\\[10pt]
			\displaystyle \lim_{x\rightarrow z\in\partial\T}w(x) = g(\psi(z)).
		\end{array}
		\right.
	\end{equation}
	By the comparison principle $v_n\leq w$. Let us consider $$\rho=\ol(L_{1},h_1,w,f).$$
	Note that $\rho$ is bounded since $w$ is bounded and the operators
	satisfy the solvability conditions.
	 Since $v_n\leq w$ we have that $\rho\in\underline{\Lambda}_{g,h_2,v_n}$ for all $n\in\mathbb{N}$. Using \eqref{caracu} we get $$u_n\leq \rho$$ for all $n\in\mathbb{N}$.

	 Now we observe that $\{v_n\}_{n\geq 1}$ is bounded, since it holds that $$v_n\leq u_{n-1}\leq \rho$$ for all $n\in\mathbb{N}$. This ends the proof the second claim.

\medskip	 
	 
Thanks to CLAIM \# 1 and CLAIM \# 2 we can take limits as $n\to \infty$ to obtain
\begin{equation}
	u_n(x) \rightarrow u_{\infty}(x) \quad \mbox{ and } \quad v_n(x) \rightarrow v_{\infty}(x) ,
\end{equation}
for all $x\in \T$.

\medskip

CLAIM \# 3: the limit pair $(u_\infty, v_\infty)$ solves the two membranes problem.

Let us prove that	
\begin{equation}
	u_{\infty}=\ol(L_1,h_1,v_{\infty},f) \quad \mbox{ and } \quad v_{\infty}=\ou(L_2,h_2,u_{\infty},g).
\end{equation}	

First, we observe that we have 
$$
\left\{
\begin{array}{l}
\displaystyle
u_n(x) \! = \!
\max \!
\Big\{ \!\beta_1 u_n(\hat{x})+(1-\beta_1)\Big(\frac{1}{m}\!\! \sum_{y\in S^1 (x)} u_n(y)\Big)\!+ \! h_1(x),
v_{n-1} (x) \Big\} ,\ x \neq \emptyset,
\\[10pt]
\displaystyle u_n(\emptyset)\! = \!
\max \!
\Big\{ \! \Big(\frac{1}{m}\sum_{y\in S^1 (\emptyset)} u_n(y)\Big)+h_1(\emptyset),
v_{n-1} (x) \Big\} ,
\\[10pt]
\displaystyle  \lim_{x\rightarrow z\in\partial\T}u_n(x)= f(\psi(z)).
\end{array}
\right. 
$$

Therefore, from the monotonicity of the sequence $u_n$ we get
$$
 u_\infty (x) \geq u_n (x)\geq
 \beta_1 u_n(\hat{x})+(1-\beta_1)\Big(\frac{1}{m}\sum_{y\in S^1 (x)} u_n(y)\Big)+h_1(x),
$$
$$
 u_\infty (\emptyset) \geq u_n (\emptyset)\geq
\Big(\frac{1}{m}\sum_{y\in S^1 (\emptyset)} u_n(y)\Big)+h_1(\emptyset),
$$
and
$$
u_\infty (x) \geq u_n (x)\geq v_{n-1}(x). 
$$
Letting $n \to \infty$ in the right hand sides we get 
\begin{equation} \label{desig.infty.u}
\begin{array}{l}
\displaystyle 
 u_\infty (x) \geq \beta_1 u_\infty(\hat{x})+(1-\beta_1)\Big(\frac{1}{m}\sum_{y\in S^1 (x)} u_\infty (y)\Big)+h_1(x), \\[10pt]
 \displaystyle
 u_\infty (\emptyset) \geq \Big(\frac{1}{m}\sum_{y\in S^1 (\emptyset)} u_\infty (y)\Big)+h_1(\emptyset),
 \end{array}
\end{equation}
and
\begin{equation} \label{desig.infty.u.v}
u_\infty (x) \geq v_\infty(x). 
\end{equation}

Now, for $v_n$ we have 
$$
v_n (x) \leq 
 \beta_2 v_n (\hat{x})+(1-\beta_2)\Big(\frac{1}{m}\sum_{y\in S^1 (x)} v_n (y)\Big)+h_{2}(x),
$$
$$
v_n (\emptyset) \leq 
\Big(\frac{1}{m}\sum_{y\in S^1 (\emptyset)} v_n (y)\Big)+h_{2}(\emptyset),
$$
and using again the monotonicity of the sequence we get (after passing to the limit
as $n \to \infty$),
\begin{equation} \label{desig.infty.v}
\begin{array}{l}
\displaystyle
v_\infty (x) \leq 
 \beta_2 v_\infty (\hat{x})+(1-\beta_2)\Big(\frac{1}{m}\sum_{y\in S^1 (x)} v_\infty (y)\Big)+h_{2}(x),\quad x \neq \emptyset, \\[10pt]
 \displaystyle 
 v_\infty (\emptyset) \leq 
 \Big(\frac{1}{m}\sum_{y\in S^1 (\emptyset)} v_\infty (y)\Big)+h_{2}(\emptyset).
 \end{array}
\end{equation}

Now, for a point $x$ in the set $\{ x: u_\infty (x) > v_\infty (x)\}$ with $x\neq \emptyset$
we have that
$u_n (x) > v_n (x)$ for $n$ large and hence we obtain, for 
$x\in \{ x: u_\infty (x) > v_\infty (x)\}$,
$$
u_n (x)=
 \beta_1 u_n(\hat{x})+(1-\beta_1)\Big(\frac{1}{m}\sum_{y\in S^k (x)} u_n(y)\Big)+h_1(x),
$$
and 
$$
v_n (x) = 
 \beta_2 v_n (\hat{x})+(1-\beta_2)\Big(\frac{1}{m}\sum_{y\in S^k (x)} v_n (y)\Big)+h_{2}(x).
$$
Passing to the limit as $n \to \infty$ we conclude that 
\begin{equation} \label{ig.infty.u}
 u_\infty (x) = \beta_1 u_\infty(\hat{x})+(1-\beta_1)\Big(\frac{1}{m}\sum_{y\in S^1 (x)} u_\infty (y)\Big)+h_1(x),
\end{equation}
and 
\begin{equation} \label{ig.infty.v}
v_\infty (x) = 
 \beta_2 v_\infty (\hat{x})+(1-\beta_2)\Big(\frac{1}{m}\sum_{y\in S^1 (x)} v_\infty (y)\Big)+h_{2}(x).
\end{equation}

An analogous computation can be done when $u_\infty (\emptyset) > v_\infty (\emptyset)$
to obtain  
\begin{equation} \label{ig.infty.u.22}
 u_\infty (\emptyset) =\Big(\frac{1}{m}\sum_{y\in S^1 (\emptyset)} u_\infty (y)\Big)+h_1(\emptyset),
\end{equation}
and 
\begin{equation} \label{ig.infty.v.22}
v_\infty (\emptyset) = 
\Big(\frac{1}{m}\sum_{y\in S^1 (\emptyset)} v_\infty (y)\Big)+h_{2}(\emptyset).
\end{equation}

Now, concerning the boundary condition of $u_\infty$, we have
 $$
 \lim_{x\rightarrow z\in\partial\T}u_n(x)= f(\psi(z)).
 $$
 Hence, from the monotonicity of the sequence, we obtain
$$
 \liminf_{x\rightarrow z\in\partial\T}u_\infty (x) \geq \lim_{x\rightarrow z\in\partial\T}u_n (x) = f(\psi(z)).
 $$
 Now, since from Section \ref{sect-soluciones}  we know that we can construct a large supersolution for
 $L_1$ with boundary datum $f$ that we call $\overline{u}$ and we have a comparison principle, we obtain 
 $$
 u_n (x) \leq \overline{u} (x) 
 $$
 for every $x\in\T$ and every $n$ and hence we get
 $$
 u_\infty (x) \leq \overline{u} (x).
 $$
 Thanks to this inequality we obtain
 $$
 \limsup_{x\rightarrow z\in\partial\T}u_\infty (x) \leq 
 \lim_{x\rightarrow z\in\partial\T} \overline{u} (x) = f(\psi(z))
 $$
 and hence we conclude that 
 \begin{equation} \label{limit.u}
 \lim_{x\rightarrow z\in\partial\T}u_\infty (x)  = f(\psi(z)).
 \end{equation}
 
 A similar argument shows that
 \begin{equation} \label{limit.v}
 \lim_{x\rightarrow z\in\partial\T}v_\infty (x)  = g(\psi(z)).
 \end{equation}
 
 To conclude we just observe that 
 \eqref{desig.infty.u}, \eqref{desig.infty.u.v}, \eqref{ig.infty.u}, \eqref{ig.infty.u.22} and \eqref{limit.u}
 show that  
 $$
 u_{\infty}=\ol(L_1,h_1,v_{\infty},f).
 $$
 In the same way \eqref{desig.infty.v}, \eqref{desig.infty.u.v}, \eqref{ig.infty.v},
 \eqref{ig.infty.v.22} and \eqref{limit.v}
 imply that  
 $$
 v_{\infty}=\ou(L_2,h_2,u_{\infty},g).
 $$

Finally, we observe that, since we have a subsolution (called $\underline{u}$) for $L_1$ with datum $f$ and a supersolution (called $\overline{v}$) for $L_2$ with datum $g$ and we assumed that $f>g$, we have
$$
\lim_{x\rightarrow z\in\partial\T}\overline{v} (x)  = g(\psi(z)) <
f (\psi(z)) = \lim_{x\rightarrow z\in\partial\T}\underline{u} (x)  .
$$
Therefore we have that
$$
v_\infty (x) \leq \overline{v} (x) < \underline{u} (x) \leq u_\infty (x)
$$
for every $x$ with $|x|$ large enough, say $|x|>C$. This proves that the contact set
$\{x : u_\infty(x) = v_\infty(x) \}$ does not contain nodes in $\{x : |x|>C\}$ and therefore the contact set 
is finite. 
\end{proof}

	\begin{section}{A probabilistic interpretation of the two membranes problem in the tree} \label{sect-games} \label{sect-probabil}

	Recall that in the introduction we mentioned that the 
system \eqref{ED1.general.intro}, that is given by
\begin{equation}
\label{ED1.general.9977}
\left\lbrace
\begin{array}{l}
\displaystyle u(x) \!= \!
\max\Big\{ \beta_1 u(\hat{x})+(1-\beta_1)\Big(\frac{1}{m} \!\! \sum_{y\in S^1 (x)} u(y)\Big)\!+\!h_1(x),
v(x) \Big\} ,\ x \neq \emptyset, \\[14pt]
\displaystyle
u(\emptyset)\!=\!
\max\Big\{ \Big(\frac{1}{m} \!\! \sum_{y\in S^1 (\emptyset)} u(y)\Big)+h_1(\emptyset),
v(\emptyset) \Big\},  \\[14pt]
\displaystyle v(x)\!= \!\min \Big\{\beta_2 v(\hat{x})+(1-\beta_2)\Big(\frac{1}{m} \!\! \sum_{y\in S^1 (x)} v(y)\Big)\!+\! h_2(x),u(x) \Big\},\ x \neq \emptyset,
\\[14pt]
\displaystyle v(\emptyset)\!= \!\min \Big\{\Big(\frac{1}{m} \!\! \sum_{y\in S^1 (\emptyset)} v(y)\Big)+h_2(\emptyset),u(\emptyset) \Big\}, 
\end{array}
\right.
\end{equation}
has a probabilistic interpretation. In this final section we include the details. 
We refer to the books \cite{BRLibro} and \cite{Lewicka}
for more information concerning games and mean value properties. 

The game is a two-player zero-sum game played in two boards (each board
is a copy of the $m-$regular tree) with the following rules: the game starts with a token at some node 
in one of the two trees $(x_0,i)$ with $x_0\in \T$ and $i=1,2$ (we add an index to denote at which board is the position of the game). In the first board the token is moved to
the predecessor of $x$ or to one of the $m$ successors using probabilities $\beta_1$ and $(1-\beta_1)/m$ while in the second board the probabilities are changed to 
$\beta_2$ and $(1-\beta_2)/m$. At the root of the tree the token moves to one of the successors with probability $1/m$.
In the first board we add a running payoff $h_1(x)$
and in the second board we add $h_2(x)$. In addition to these rules for the movements of the token the players have a choice to play at the same board or to change boards. 
If $x_0$ is in the first board then
Player I (who aims to maximize the expected payoff) decides to remain in the same board 
and play one round of the game moving to the predecessor of $x$ or to one of the $m$ successors using probabilities $\beta_1$ and $(1-\beta_1)/m$
and collecting a running payoff $h_1(x)$ or to jump to the other board. 
On the other hand when $x_0$ is in the second board then it is 
Player II (who aims to minimize the expected payoff) who 
decides to remain in the same board 
and play one round of the game with probabilities $\beta_2$ and $1-\beta_2$
and collecting a running payoff $h_2(x)$ or to jump to the first board.

We take a finite level $M$ (large) and we add the rule that the game ends 
when the position arrives to a node at level $M$, $x_\tau$. We also have two 
final payoffs $f$ and $g$. In the first board Player I pays to Player II the amount encoded 
by $f(\psi(x_\tau))$ while in the second board the final payoff is given by $g(\psi (x_\tau))$
plus the amount encoded in the running payoff. That is, the total payoff of one occurrence
of this game is given by 
$$
\begin{array}{l}
\displaystyle
\mbox{total  payoff} : = f(x_{\tau})\chi_{\{j=1\}}(j_{\tau})+g(x_{\tau})\chi_{\{j=2\}}(j_{\tau})
\\[6pt]
\displaystyle \qquad \qquad \qquad \quad -\sum_{k=0}^{\tau -1}\Big(-h_1 (x_k)\chi_{\{j=1\}}(j_{k+1})-h_2 (x_k)\chi_{\{j=2\}}(j_{k+1})\Big).
\end{array}
$$
Notice that the total payoff is the sum of the final payoff (given by $f(x_{\tau})$ or by $g(x_{\tau})$
according to the board at which the position leaves the domain) and the running payoff
that is given by $h_1(x_k)$ and $ h_2(x_k)$ corresponding to the board in which we play at step $k+1$. 

Notice that the successive positions of the token are determined by
the jumping probabilities given by $\beta_i$ (to jump to the predecessor), $(1-\beta_i)/m$
(to jump to one of the successors) $i=1,2$ according to the board at which the game is played
and the strategies of both players (the choice that one of them make at each turn
regarding the possible change of board). We refer to \cite{BRLibro} for more details
and precise definitions of strategies. We will denote by
$S_I$ a strategy for the first player and $S_{II}$ a strategy for the second player. 

Then the value function 
for this game is defined as
$$
w_M (x,i) = \inf_{S_I} \sup_{S_{II}} \mathbb{E}^{(x,i)} (\mbox{final payoff})
=  \sup_{S_{II}} \inf_{S_I} \mathbb{E}^{(x,i)} (\mbox{final payoff}).
$$
Here the $\inf$ and $\sup$ are taken among all possible strategies of the players.
In this definition of the value of the game we penalize games that never end
(both players may choose to change boards at the same node for ever producing a game that never ends). 
The value of the game $w_M (x,i)$ encodes the amount that the players expect to get/pay
playing their best with final payoffs $f$ and $g$ at level $M$.

We have that the pair of functions $(u_M,v_M)$ given by $u_M(x) = w_M (x,1)$
and $v_M (x) = w_M (x,2)$ is a solution to the system \eqref{ED1.general.9977}
in the finite subgraph of the tree composed by nodes of level less than $M$.

Notice that the first equation encodes all the possibilities for the next position of the game in the first board and includes a maximum since the first player has the choice 
to play or change to the second board. 
Similarly, the second equation
takes into account all the possibilities for the game in the second board and includes a minimum since in this case it is the second player who decides to play or to change boards.

Now our goal is to take the limit as $M \to \infty$ in these value functions for this game and
obtain that the limit is the unique solution to our system that 
verifies the boundary conditions 
\begin{equation}
\label{ED2.general.9977}
\left\lbrace
\begin{array}{ll}
\displaystyle \lim_{{x}\rightarrow z\in\partial\T}u(x) = f(\psi(z)) ,  \\[10pt]
\displaystyle \lim_{{x}\rightarrow z\in\partial\T}v(x)=g(\psi(z)). 
\end{array}
\right.
\end{equation}

\begin{theorem} Fix two continuous functions $f,g:[0,1] \to \mathbb{R}$. 
Let $(u_L,v_L)$ be the values of the game in the finite subgraph of the tree with nodes of level less than $M$, that is, $(u_M,v_M)$ is the solution to \eqref{ED1.general.9977}
with conditions $u_M(x) = f (\psi(x))$ and
$v_L(x) = g (\psi(x))$ at nodes of level $L$.
Then $(u_M,v_M)$ converge, along subsequences, as $M\to \infty$ to $(u,v)$ solutions to the two membranes problem, that is, a solution to 
\eqref{ED1.general.9977} with \eqref{ED2.general.9977}
in the whole tree. 
\end{theorem}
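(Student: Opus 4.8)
The plan is to realise $(u,v)$ as a pointwise limit, along a subsequence, of the finite--board value functions $(u_M,v_M)$; to pass to the limit in the equations, which is immediate because \eqref{ED1.general.9977} is a local relation at each node; and to control the boundary behaviour by barriers built from Theorem~\ref{teo.intro.una.eacuacion} and Remark~\ref{rem.33}. First I would record that, by the dynamic programming principle already stated, $(u_M,v_M)$ satisfies \eqref{ED1.general.9977} on $\{|x|\le M\}$ with $u_M=f(\psi(\cdot))$ and $v_M=g(\psi(\cdot))$ at level $M$; in particular $u_M$ is a supersolution of $L_1u=h_1$, $v_M$ a subsolution of $L_2v=h_2$, and $u_M\ge v_M$. (We work, as throughout, with $0\le\beta_i<\tfrac12$, with $h_i$ satisfying the solvability condition \eqref{cond.una.ecuacion.intro}, and with $f>g$.) The estimate I aim for is the following: \emph{for every $\varepsilon>0$ there exist bounded functions $\underline u_\varepsilon\le\overline u_\varepsilon$ and $\underline v_\varepsilon\le\overline v_\varepsilon$ on $\T$, converging uniformly at $\partial\T$ to $f(\psi(\cdot))\pm\varepsilon$ and $g(\psi(\cdot))\pm\varepsilon$ respectively, such that $\underline u_\varepsilon\le u_M\le\overline u_\varepsilon$ and $\underline v_\varepsilon\le v_M\le\overline v_\varepsilon$ on $\{|x|\le M\}$ whenever $M\ge M_0(\varepsilon)$.}

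To construct the barriers I would use Remark~\ref{rem.33}: enlarging $h_1$ at the root does not affect the solvability condition nor the boundary limit of the associated solution, but makes it as large as we wish on any prescribed finite set. This produces a bounded supersolution $\overline u_\varepsilon$ of $L_1u=h_1$ with boundary limit $f+\varepsilon$ and with $\overline u_\varepsilon\ge w^{(2)}+\varepsilon$ on all of $\T$, where $w^{(2)}$ is the bounded solution of $L_2w=h_2$ with datum $g$ from Theorem~\ref{teo.intro.una.eacuacion} (the inequality holds near $\partial\T$ because $f>g$, and on the remaining finitely many nodes by the enlargement). For $M$ large, the finite--tree comparison principle --- the proof of Lemma~\ref{lema-compar}, with the downward chain now absorbed at level $M$ --- gives $v_M\le w^{(2)}+\varepsilon\le\overline u_\varepsilon$, since $v_M$ is a subsolution of $L_2v=h_2$ equal to $g$ at level $M$ while $w^{(2)}+\varepsilon$ exceeds $g$ there; and $\overline u_\varepsilon$ exceeds $f$ at level $M$ as well. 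Then $u_M\le\overline u_\varepsilon$ is obtained by examining a node $x_0$ realising $\sup_{|x|\le M}(u_M-\overline u_\varepsilon)$: if this supremum were positive it would be attained with $|x_0|<M$, and at $x_0$ either $u_M(x_0)=v_M(x_0)\le\overline u_\varepsilon(x_0)$, contradicting positivity, or $L_1u_M(x_0)=h_1(x_0)\le L_1\overline u_\varepsilon(x_0)$; in the latter case, choosing $x_0$ closest to the root among the maximisers (as in the proof of Lemma~\ref{lema-compar}) yields $(u_M-\overline u_\varepsilon)(x_0)\le\beta_1(u_M-\overline u_\varepsilon)(\hat{x_0})+(1-\beta_1)\tfrac1m\sum_{y\in S^1(x_0)}(u_M-\overline u_\varepsilon)(y)$, forcing some successor to be a maximiser too, and iterating one reaches level $M$, where $u_M-\overline u_\varepsilon<0$ --- a contradiction. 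The lower barrier $\underline u_\varepsilon\le u_M$ is easier, since the obstacle is inactive: as $u_M$ is a supersolution of $L_1u=h_1$, a plain comparison against a subsolution of $L_1u=h_1$ with datum $f-\varepsilon$ (again from Remark~\ref{rem.33}) works for $M$ large. The two estimates for $v$ are symmetric: $v_M\le\overline v_\varepsilon$, with $\overline v_\varepsilon$ the solution of $L_2w=h_2$ with datum $g+\varepsilon$, is the plain--comparison case, while $\underline v_\varepsilon\le v_M$ is the obstacle case and needs a subsolution $\underline v_\varepsilon$ of $L_2v=h_2$ with datum $g-\varepsilon$ chosen, again via Remark~\ref{rem.33}, so that $\underline v_\varepsilon\le\underline u_\varepsilon$ on $\T$. \emph{This one--sided comparison for the obstacle--constrained $u_M$ (and $v_M$) --- squeezing $w^{(2)}+\varepsilon$ between $v_M$ and $\overline u_\varepsilon$ so that the coincidence alternative becomes harmless --- is the main obstacle; the rest is routine.}

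With the barrier estimate in hand the conclusion follows quickly. Choosing $\varepsilon=1$ gives $\|u_M\|_\infty+\|v_M\|_\infty\le C$ for all $M\ge M_0(1)$, so a diagonal argument over the countable set $\T$ yields a subsequence $M_j\to\infty$ along which $u_{M_j}(x)\to u(x)$ and $v_{M_j}(x)\to v(x)$ for every $x\in\T$, with $u,v$ bounded. Fixing $x$ and letting $j\to\infty$ in \eqref{ED1.general.9977} --- valid at $x$ as soon as $M_j>|x|+1$, and involving only the finitely many neighbours of $x$ together with the continuous operations $\max$, $\min$ and averaging --- shows that $(u,v)$ solves \eqref{ED1.general.9977} on all of $\T$. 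Finally, passing to the limit along $M_j$ in the barrier estimate gives $\underline u_\varepsilon\le u\le\overline u_\varepsilon$ and $\underline v_\varepsilon\le v\le\overline v_\varepsilon$ for every $\varepsilon>0$; since these barriers converge uniformly at $\partial\T$ to $f(\psi(\cdot))\pm\varepsilon$ and $g(\psi(\cdot))\pm\varepsilon$ and $\varepsilon>0$ is arbitrary, we obtain
$$
\lim_{x\rightarrow z\in\partial\T}u(x)=f(\psi(z)),\qquad \lim_{x\rightarrow z\in\partial\T}v(x)=g(\psi(z))
$$
uniformly, which is \eqref{ED2.general.9977}. Hence the bounded pair $(u,v)$ solves the two membranes problem in the whole tree, as claimed.
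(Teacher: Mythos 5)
Your proof is correct and follows the same overall strategy as the paper's: uniform bounds on the finite-level value functions give a pointwise-convergent subsequence, the mean-value system \eqref{ED1.general.9977} passes to the limit because it is a local relation at each node, and barriers constructed from Theorem~\ref{teo.intro.una.eacuacion} and Remark~\ref{rem.33} control the boundary behaviour. Where you add value is in the boundary step. The paper only records the level-$M$ inequalities $u_M(x)\le\max_{I_x}f+\eta$ and $v_M(x)\le\max_{I_x}g+\eta$ and then asserts the $\limsup$/$\liminf$ bounds for $(u,v)$ directly, leaving unsaid how a bound at level $M$ propagates into the interior. You carry this out explicitly: the one-sided comparison for the obstacle-constrained components, with the bridging solution $w^{(2)}+\varepsilon$ squeezed between $v_M$ and $\overline u_\varepsilon$ so that the coincidence alternative is harmless, combined with the ``first maximiser, then descend along a chain of maximisers to level $M$'' mechanism from the proof of Lemma~\ref{lema-compar}. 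This makes the argument self-contained and is, if anything, tighter than the published version. One caveat, which you inherit from Remark~\ref{rem.33} rather than introduce: when $\beta_1=0$ the factor $(\beta_1/(1-\beta_1))^{|x|}$ vanishes for $|x|\ge1$, so enlarging only $h_1(\emptyset)$ does not raise the associated solution at any node other than the root. To obtain $\overline u_\varepsilon\ge w^{(2)}+\varepsilon$ on the initial finitely many levels in that edge case one should enlarge $h_1$ on all nodes up to some level $K$ (which still preserves the solvability condition and the boundary limit). This is a defect of the Remark as stated, shared by the paper's own use of it, and does not affect the case $0<\beta_1<\tfrac12$.
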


\begin{proof}
From the estimates that we have proved in the previous sections for a
solution $(u_M,v_M)$ we know that these functions are uniformly bounded
in $M$. Therefore, we can extract a subsequence $M_j \to \infty$ such that
$$
u_{M_j}(x) \to u (x) \qquad \mbox{and} \qquad v_{M_j} (x) \to v (x),
$$
for every $x \in \T$. 
Passing to the limit in the equations
$$
\left\lbrace
\begin{array}{l}
\displaystyle u_M(x)=
\max\Big\{ \beta_1 u_M(\hat{x})+(1-\beta_1)\Big(\frac{1}{m}\sum_{y\in S^1 (x)} u_M(y)\Big)+h_1(x),
v_M(x) \Big\},  \\[10pt]
\displaystyle 
u_M(\emptyset)=
\max\Big\{ \Big(\frac{1}{m}\sum_{y\in S^1 (\emptyset)} u_M(y)\Big)+h_1(\emptyset),
v_M(\emptyset) \Big\},  \\[10pt]
\displaystyle v_M(x)=\min \Big\{\beta_2 v_M(\hat{x})+(1-\beta_2)\Big(\frac{1}{m}\sum_{y\in S^1 (x)} v_M(y)\Big)+h_2(x),u_M(x) \Big\},
\\[10pt]
\displaystyle v_M(\emptyset)=\min \Big\{\Big(\frac{1}{m}\sum_{y\in S^1 (\emptyset)} v_M(y)\Big)+h_2(\emptyset),u_M(\emptyset) \Big\}, 
\end{array}
\right.
$$
we get that the limit $(u,v)$ solves
$$
\left\lbrace
\begin{array}{l}
\displaystyle u(x)=
\max\Big\{ \beta_1 u(\hat{x})+(1-\beta_1)\Big(\frac{1}{m}\sum_{y\in S^1 (x)} u(y)\Big)+h_1(x),
v(x) \Big\},  \\[10pt]
\displaystyle 
u(\emptyset)=
\max\Big\{ \Big(\frac{1}{m}\sum_{y\in S^1 (\emptyset)} u(y)\Big)+h_1(\emptyset),
v(\emptyset) \Big\},  \\[10pt]
\displaystyle v(x)=\min \Big\{\beta_2 v(\hat{x})+(1-\beta_2)\Big(\frac{1}{m}\sum_{y\in S^1 (x)} v(y)\Big)+h_2(x),u(x) \Big\},
\\[10pt]
\displaystyle v(\emptyset)=\min \Big\{\Big(\frac{1}{m}\sum_{y\in S^1 (\emptyset)} v(y)\Big)+h_2(\emptyset),u(\emptyset) \Big\}, 
\end{array}
\right.
$$
in the whole tree.

On the other hand, since $f$ and $g$ are continuous, 
 given $\eta>0$ there exists $M$ large enough
such that 
$$
|u_M(x) - \max_{I_x} f | = |f(\psi(x)) - \max_{I_x} f | < \eta, $$
and
$$
|v_M(x) - \max_{I_x} g | = |g(\psi(x)) - \max_{I_x} g |< \eta
$$
for every $x$ at level $M$ with $M$ large enough. Therefore, we get
$$
u_M(x) \leq \max_{I_x} f + \eta, \qquad \mbox{and} \qquad v_M(x) \leq \max_{I_x} g + \eta
$$
for every $x$ at level $M$ large. 
Using that $(u_M,v_M)$ converge as $M \to \infty$ to $(u,v)$ we conclude that 
$$
\left\lbrace
\begin{array}{ll}
\displaystyle \limsup_{{x}\rightarrow z\in\partial\T} u(x) \leq f(\psi(z)) + \eta ,  \\[10pt]
\displaystyle \limsup_{{x}\rightarrow z\in\partial\T}v(x) \leq g(\psi(z))+ \eta. 
\end{array}
\right.
$$
A similar argument using that
$$
u_M(x) \geq \min_{I_x} f - \eta, \qquad \mbox{and} \qquad v_M(x) \geq \min_{I_x} g - \eta
$$
for $M$ large gives that
$$
\left\lbrace
\begin{array}{ll}
\displaystyle \liminf_{{x}\rightarrow z\in\partial\T} u(x) \geq f(\psi(z)) - \eta ,  \\[10pt]
\displaystyle \liminf_{{x}\rightarrow z\in\partial\T}v(x) \geq g(\psi(z)) - \eta. 
\end{array}
\right.
$$

Therefore, since $\eta $ is arbitrary, we conclude that
$(u,v)$ satisfies
$$
\left\lbrace
\begin{array}{ll}
\displaystyle \lim_{{x}\rightarrow z\in\partial\T} u(x) = f(\psi(z)) ,  \\[10pt]
\displaystyle \lim_{{x}\rightarrow z\in\partial\T}v(x) = g(\psi(z)), 
\end{array}
\right.
$$
and the proof is completed. 
\end{proof}

	\end{section}

	{\bf Acknowledgments}
	
	I. Gonzálvez was partially supported by the European Union's Horizon 2020 research and innovation programme under the Marie Sklodowska-Curie grant agreement No.\,777822, and by grants CEX2019-000904-S, PID-2019-110712GB-I00, PID-2020-116949GB-I00, and RED2022-134784-T, by MCIN/AEI (Spain).
	
	A. Miranda and J. D. Rossi were partially supported by 
			CONICET PIP GI No 11220150100036CO
(Argentina), PICT-2018-03183 (Argentina) and UBACyT 20020160100155BA (Argentina).


\end{document}